\newcommand{\ddr}{\mathrm{d}}
\def\E{\mathbb{E}}
\theoremstyle{plain}
\newtheorem{theorem}{Theorem}[section]
\newtheorem{lemma}[theorem]{Lemma}
\newtheorem{corollary}{Corollary}
\theoremstyle{remark}
\newtheorem{remark}[theorem]{Remark}
\newtheorem{definition}[theorem]{Definition}
\newtheorem*{example}{Example}
\begin{document}

\begin{frontmatter}
\title{Extremal shot noise processes and 
random cutout sets}
\runtitle{Extremal shot noise processes and  random cutout sets}

\begin{aug}
\author[A]{\inits{F.}\fnms{Cl\'ement}~\snm{Foucart}\ead[label=e1]{foucart@math.univ-paris13.fr}}\orcid{0000-0002-6750-8795}
\author[B]{\inits{L.}\fnms{Linglong}~\snm{Yuan}\ead[label=e2]{linglong.yuan@liverpool.ac.uk}}\orcid{0000-0002-7851-1631}
\address[A]{CMAP, Ecole Polytechnique, Palaiseau, France and \\ 
\ LAGA, Institut Galil\'ee, Universit\'e Sorbonne Paris Nord\\ Villetaneuse, France\printead[presep={,\ }]{e1}}
\address[B]{Department of Mathematical Sciences\\
University of Liverpool, United Kingdom \printead[presep={,\ }]{e2}}
\end{aug}

\begin{abstract}
We study some fundamental properties, such as the transience, the recurrence, the first passage times and the zero set of a certain type of sawtooth Markov processes, called extremal shot noise processes. The sets of zeros of the latter are Mandelbrot’s random cutout sets, i.e. the sets obtained after placing Poisson random  covering intervals on the positive half-line. Based on this connection, we provide a new proof of Fitzsimmons-Fristedt-Shepp Theorem  which characterizes the random cutout sets.
\end{abstract}

\begin{keyword}
\kwd{Extremal process} \kwd{first passage times} \kwd{invariant function} \kwd{random covering} \kwd{sawtooth process} \kwd{shot noise process} \kwd{subordinator}
\end{keyword}
\end{frontmatter}
\section{Introduction}\label{CBI}

Extremal shot noise processes (ESNs) first appeared in the eighties in the framework of applied stochastic geometry and of random sets for modelling extremes in a spatial setting, see \cite{zbMATH03902440} (page  470) and \cite{zbMATH00796709}.  They have been then reintroduced in a more general setting by \cite{Dombry} who has studied some of their properties and shed light on their connection with max-stable random fields. We work here in the setting of one-dimensional Markov processes.

Let $\mathcal{N}:=\sum_{s\geq 0}\delta_{(s,\xi_s)}$ be a Poisson point process (PPP)
on $[0,\infty)\times (0,\infty)$ with intensity $\lambda \times \mu$ with $\mu$ a Borel measure on $(0,\infty)$ and $\lambda$ the Lebesgue measure. We denote the tail of $\mu$ by $\bar{\mu}(x)=\mu([x,\infty))$ and suppose $\bar{\mu}(x)<\infty$ for all $x>0$. Denote by $(a)_+:=\max(a,0)$, the positive part of any real number $a$. Let $b\in \mathbb{R}$.

\begin{definition}\label{ESNdef}
We call standard $\mathrm{ESN}(b,\mu)$ the process $(M(s),s\geq 0)$ valued in $[0,\infty)$ and obtained from $\mathcal{N}$ as follows:
\[M(t):=\sup_{0\leq s\leq t}\big(\xi_s-b(t-s)\big)_+.\]
\end{definition}

\noindent When $b=0$, the process $M$ is a classical extremal process. Those processes have been studied by \cite{dwass1966} and \cite{APR:10159662}. We refer also to Resnick's book \cite[Chapter 4.3]{Res87} and the references therein. When $b\neq 0$,  the contribution of any atom to the process is affected by its age, which is the so-called shot noise structure.

Extremal shot noise processes as defined above form a certain class of sawtooth processes, in the sense that they evolve linearly or stay constant between their jumps. Such processes are known to play an important role in the theory of Markov processes, see for instance \cite[page 49]{zbMATH01349990}. When the intensity measure $\mu$ of the Poisson point process $\mathcal{N}$ is finite, $M$ is a piecewise deterministic Markov process.  
We refer the reader for instance to \cite{Zbl0565.60070, Zbl0780.60002} for a thorough study of this class of processes. 
In a close spirit of ESNs, certain processes with jumps of finite intensity and linear release, have been studied by \cite{zbMATH05033694}, \cite{Zbl0994.60092} and \cite{zbMATH05918584}.

It turns out that many natural problems can be solved with closed-form solutions for ESNs. 
Their finite dimensional laws, their semigroup and their long-term behavior are for instance obtained in Theorem \ref{thmfddsemigroupESN}.   The generator is studied in Theorem \ref{thmgeneratorESN}, and last but not least, the Laplace transform of their first-passage times, also available explicitly in terms of $b$ and $\mu$, is given in Theorem \ref{thmfirstpassagetimeM}.


Random cutout sets were introduced by \cite{Mandelbrot}. They are defined as the sets of real numbers left uncovered by  Poisson random covering intervals on the positive half line. Namely, the random cutout set based on the PPP $\mathcal{N}$ is given by 
\begin{equation}\label{randomcutoutset} \mathcal{R}:=[0,\infty)-\bigcup_{s\geq 0}(s,s+\xi_s).\end{equation}
Those random sets are at the core of the theory of random coverings, see \cite{zbMATH04203384,zbMATH01424076}. Some of their multifractal properties have been studied by  \cite{zbMATH02093931, zbMATH02196675}. They also appear in many other contexts. We refer for instance to \cite{zbMATH00029121,zbMATH00812718} where they are used for studying the existence of increase times in Lévy processes and the non-differentiability of their sample paths.  They also play a crucial role in the study of zero sets of certain processes, see e.g.\ \cite{zbMATH06347448},  \cite{zbMATH05678697} and  \cite{foucart2014}. Some random sets with closely related constructions are  studied in \cite{zbMATH06441328} and \cite{zbMATH01901511}.

The question of when the random set $\mathcal{R}$ is almost surely reduced to the singleton $\{0\}$, that is to say when the whole open half-line is covered, was asked in \cite{Mandelbrot} and a necessary and sufficient condition was found by \cite{zbMATH03378272}. A definitive answer on how to characterize the law of the random cutout set is given by a Theorem of \cite{Fitzsimmons}, with the potential measure of $\mathcal{R}$ given explicitly. Their arguments were based on approximations of $\mathcal{R}$ by intersections of regenerative sets, see also \cite[Chapter 7]{subordinatorbertoin} for those notions. 

A striking feature of the ESN process lies in its simple connection with the random cutout set associated to $\mathcal{N}$. We shall see that the random set $\mathcal{R}$ in \eqref{randomcutoutset} coincides with the closure of the zero set of a standard $\mathrm{ESN}(1,\mu)$ process. The main contribution of the paper is a new proof of Fitzsimmons-Fristedt-Shepp's Theorem, see Corollary \ref{corollaryrandomcutoutset}, based on this connection and on classical arguments of potential theory of Markov processes. Most important properties of $\mathcal{R}$, such as the regenerative property (see e.g. \cite{Maisonneuve} for this notion) and the fact that it is a perfect set (i.e. it has no isolated point), will also directly follow from this representation.\\ 

\noindent \textbf{Notation.} Let $\mathbb{R}_+:=[0,\infty)$. For any subset $A\subset \mathbb{R}_+$, we denote its closure by $\bar{A}$. For any $x,y\in \mathbb{R}$, we denote by $x\wedge y$ and $x\vee y$  the minimum and the maximum of $x$ and $y$.  In any integral $\int_a^b$, we adhere to the convention that the lower delimiter a is excluded from the integration, while the upper delimiter $b$ is included (except for $b=\infty$). We denote by $C_0([0,\infty))$ the space of continuous functions vanishing at $\infty$, and by $\|f\|_{\infty}$ the supremum norm of $f$. 
We set $C^{1,0}([0,\infty))$ the space of continuously differentiable functions vanishing at $\infty$ and whose derivative vanishes at $\infty$. For any function $f$, we denote by $f_{|[a,b)}$ the restriction of $f$ on the interval $[a,b)$. The limit inferior  and superior of a function $f$ are denoted respectively by $\liminf f$ and $\limsup f$. 
For any event $A$, we denote by $A^c$ the complementary event. Lastly, $X\overset{\text{law}}{=}Y$ means that the random variables $X$ and $Y$ have the same law. 




\section{Extremal shot noise processes as Markov processes}\label{secESN}
%

We first collect  some basic observations from Definition \ref{ESNdef}. Let $M$ be a standard $\mathrm{ESN}(b,\mu)$ process. The process $M$ takes only nonnegative values, starts from $0$ and has clearly c\`adl\`ag paths. 

In the case $b\leq 0$ (i.e. the slopes are nonnegative) the process has almost-surely non-decreasing sample paths which go towards $\infty$. When $b<0$, paths are increasing, and they are not monotonic when $b>0$, see Figure \ref{behaviorESN} below. Note also that by construction,  for any $b\in \mathbb{R}$, we have $M(t)\geq (-bt)_+\geq 0$ for all $t\geq 0$, $\mathbb{P}$-a.s..  
We shall mainly focus on the case $b\neq 0$ in this article. 

The Markov property of the Poisson point process $\mathcal{N}$ and the fact that the ``response function" $s\mapsto -b(t-s)$ in the shot noise structure is linear in time will imply that the standard ESN process $(M(t),t\geq 0)$ is a time-homogeneous Markov process. In particular there exists a family of probability distributions $(\mathbb{P}_x,x\in \mathbb{R}_+)$ on the space of non-negative c\`adl\`ag paths such that $\mathbb{P}_x$ is the law of the process $M$ started with initial value $M(0)=x$. The probability law $\mathbb{P}_x$ can be constructed on the same probability space as $\mathcal{N}$
by adjoining a point $(0,x)$ to the PPP $\mathcal{N}$. Namely set $\mathcal{N}^x:=\delta_{(0,x)}+\mathcal{N}$ and define 
\begin{equation}\label{Mx}
M^x(t)=\sup_{0\leq s\leq t}\left\{ \big(\xi_s-b(t-s)\big)_+: (s,\xi_s) \text{ is an atom of }  \mathcal{N}^x \right\}=(x-bt)_+\vee M^0(t),
\end{equation}
where $M^0$ is the standard $\mathrm{ESN}(b,\mu)$ process. 

We see from \eqref{Mx} that the process will leave $x$ along $(x-bt)_+$ until it encounters the first atom of $\mathcal{N}$ satisfying $\xi>(x-bt)_+$ and jumps there. If the process is able to reach the boundary $0$, then it stays at $0$ until the next atom of $\mathcal{N}$. 

In particular, any point $x>0$ is instantaneous (it is left immediately) and  is irregular for itself (the process does not return to it immediately). Indeed, when $b<0$, the paths being increasing, the point $x$ will not be reached again. When $b>0$, the process may only return to $x$ by firstly getting back above it and secondly reaching it by linear decay. Since by assumption $\bar{\mu}(x)<\infty$, returning to $x>0$ can occur only after a strictly positive time a.s..
\begin{figure}[htbp]
\centering \noindent
\includegraphics[height=.14 \textheight]{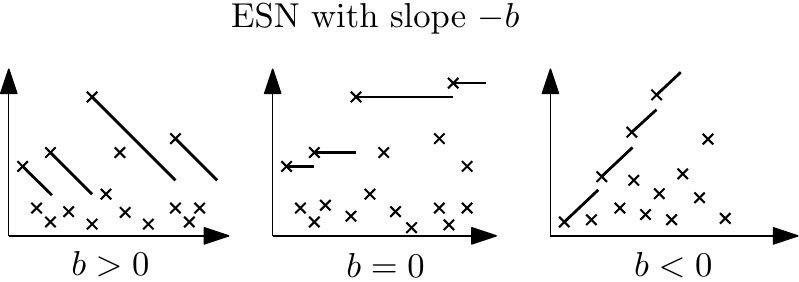}
\caption{Sample paths of an $\mathrm{ESN}(b,\mu)$.}
\label{behaviorESN}
\end{figure}
\vspace*{-5mm}

When $b>0$ and $\bar{\mu}(0)<\infty$, it is clear that the point $0$ will be reached with positive probability. This is furthermore a holding point. Indeed the process started at $0$ stays at $0$ for an exponential time with parameter $\bar{\mu}(0)$, the rate at which a new atom of $\mathcal{N}$ is encountered.  When $\bar{\mu}(0)=\infty$, the random set $\{0\leq s\leq t: \xi_s\in (0,1)\}$  is almost surely a dense subset of $[0,t]$ and the standard process $M$ (recall that it is the process starting with initial value $0$) makes immediately a positive jump a.s.. 

The only point at which the process may have a non trivial behavior is thus $0$. In the case $b>0$, a natural question is whether or not the negative slopes are strong enough for the paths to be able to reach $0$ when $\bar{\mu}(0)=\infty$. We may also wonder if the process can be transient. Before tackling this problem, see Theorem \ref{thmfirstpassagetimeM}, we gather in our first theorem fundamental properties of ESNs, including for instance their Markov property. 



\begin{theorem}[Finite dimensional laws, semigroup and stationary distribution]\label{thmfddsemigroupESN} Let $b\in \mathbb{R}$, $\mu$ be a measure on $(0,\infty)$ such that $\bar{\mu}(x)<\infty$ for all $x>0$, and $M$ be an $\mathrm{ESN}(b,\mu)$. 
\begin{enumerate}
    \item Let $n\geq 1$ and $0=s_0<s_1<s_2<\cdots<s_n$.  For any $u_1,\cdots, u_n\in \mathbb{R}_+$. 
\begin{align}
&\mathbb{P}_0(M(s_1)\leq u_1,\cdots, M(s_n)\leq u_n) \nonumber \\
&=\exp\left( -\sum_{i=1}^{n}\int_{s_{i-1}}^{s_{i}}\bar{\mu}\left(\bigwedge_{j=i}^{n}\Big(u_j+b(s_j-t)\Big)\right)\ddr t\right)\prod_{i=1}^n\mathbbm{1}_{\{u_i\geq (-bs_i)_+\}}.
\end{align}
In particular the one-dimensional law at time $s$ has the following cumulative distribution function: for any $u\in\mathbb{R}$,
\begin{align}
F^0_s(u):=\mathbb{P}_0(M(s)\leq u)&=\exp\left( -\int_{0}^{s}\bar{\mu}\left(u+b(s-t)\right)\ddr t\right)\mathbbm{1}_{\{u\geq (-bs)_+\}}\label{CDFM0allb}\\
&=\exp\left( -\frac{1}{b}\int_{u}^{u+bs}\bar{\mu}(y)\ddr y\right)\mathbbm{1}_{\{u\geq (-bs)_+\}} \text{ if } b\neq 0.\label{CDFM0bneq0}
\end{align}
 \item Let $x\in [0,\infty)$ and $t\geq 0$. For any $u\in \mathbb{R}$, $$F^x_t(u):=\mathbb{P}_x(M(t)\leq u)= F^0_t(u)\mathbbm{1}_{\{u\geq (x-bt)_+\}}.$$ 
\item The process $M$ is a Markov process with Feller property, i.e. its semigroup $(P_t)$ satisfies
 \begin{enumerate}
  \item $P_tC_0([0,\infty))\subset C_0([0,\infty))$,
   \item For any $f\in C_0([0,\infty))$, $P_tf\underset{t\rightarrow 0}{\longrightarrow} f$ uniformly.
 \end{enumerate}

\item  Assume $b\ge 0$. The following equivalence holds \[\forall s>0,  \mathbb{P}_0(M(s)=0)>0 \text{ if and only if } \int_0^1 \bar{\mu}(v)\ddr v<\infty.\]
Moreover, in this case  the Lebesgue measure of the zero-set of $M$, $\mathcal{Z}:=\{t>0: M(t)=0\}$, is strictly positive a.s. (and the boundary $0$ is said to be sticky). \smallskip
\item Assume $b>0$.  When $\int_1^\infty \bar{\mu}(u)\ddr u=\infty$, $M(s)\rightarrow \infty$ in $\mathbb{P}_x$-probability as $s$ goes to $\infty$, for all $x\geq 0$. When $\int_1^\infty \bar{\mu}(u)\ddr u<\infty$, the process $M$ admits a non-degenerate stationary distribution whose cumulative distribution function is given by 
\[\pi([0,u])=\exp\left(-\frac{1}{b}\int_u^\infty \bar{\mu}(v)\ddr v\right), \text{ for any } u\geq 0.\]
Moreover, when $\int_1^\infty \bar{\mu}(u)\ddr u<\infty$, for all $x\geq 0$,
the probability measure $\mathbb{P}_x(M(t)\in \cdot)$ converges in total variation distance towards  $\pi$. We have furthermore for all $t>x/b$,
\begin{equation}\label{dtvineq}
d_{\mathrm{TV}}\big(\mathbb{P}^{x}(M(t)\in \cdot),\pi(\cdot)\big)\leq \frac{1}{b}\int_{bt}^{\infty}\bar{\mu}(u)\ddr u,
\end{equation} 
where we have denoted by $d_{\mathrm{TV}}$ the total variation distance.
 \end{enumerate}
\end{theorem}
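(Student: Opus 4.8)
\emph{Outline of the proof.} The statement splits into several essentially independent claims, and all of them flow from two facts already available: the Poisson avoidance formula $\mathbb{P}(\mathcal{N}(B)=0)=e^{-(\lambda\times\mu)(B)}$, and the pathwise identity \eqref{Mx}, $M^x(t)=(x-bt)_+\vee M^0(t)$. First I would establish the finite-dimensional laws. Using that each $u_i\ge 0$, the event $\bigcap_{i=1}^{n}\{M(s_i)\le u_i\}$ says precisely that $\mathcal{N}$ has no atom in the region $\bigcup_{i=1}^{n}\{(r,y):s_{i-1}\le r<s_i,\ y>\bigwedge_{j=i}^{n}(u_j+b(s_j-r))\}$ (with $s_0:=0$), a set contained in $[0,s_n)\times(0,\infty)$ whose $\lambda\times\mu$-mass is $\sum_{i=1}^{n}\int_{s_{i-1}}^{s_i}\bar\mu(\bigwedge_{j=i}^{n}(u_j+b(s_j-t)))\,\ddr t$; the avoidance formula then produces the exponential factor, while the constraint $M(s_i)\ge(-bs_i)_+$ (noted before the theorem) produces the indicators. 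Taking $n=1$ gives \eqref{CDFM0allb}, and the substitution $y=u+b(s-t)$ gives \eqref{CDFM0bneq0}. The formula for $F^x_t$ is then immediate from \eqref{Mx}, since $\{M^x(t)\le u\}=\{(x-bt)_+\le u\}\cap\{M^0(t)\le u\}$ and $(x-bt)_+$ is deterministic.

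For the Markov and Feller properties I would split the defining supremum of $M^x(t+s)$ at time $t$: the atoms with $r\le t$ contribute $(\sup_{0\le r\le t}(\xi_r-b(t-r))-bs)_+$, which equals $(M^x(t)-bs)_+$ after using $\sup_r(a_r-c)_+=(\sup_r a_r-c)_+$ and checking the only non-trivial case $b>0$; the atoms with $t<r\le t+s$ contribute, after translation, a standard $\mathrm{ESN}(b,\mu)$ run for time $s$ and built from $\mathcal{N}|_{(t,\infty)}$, hence independent of $\mathcal{F}_t:=\sigma(\mathcal{N}|_{[0,t]})$. Combined with \eqref{Mx} this yields $\mathbb{E}[f(M^x(t+s))\mid\mathcal{F}_t]=P_sf(M^x(t))$, i.e. the Markov property and $P_{t+s}=P_tP_s$. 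For the Feller property, $P_tf(x)=\mathbb{E}[f((x-bt)_+\vee M^0(t))]$: since $x\mapsto(x-bt)_+$ is continuous and tends to $\infty$, bounded convergence gives $P_tf\in C_0([0,\infty))$ whenever $f\in C_0([0,\infty))$; and since $M^0$ is c\`adl\`ag with $M^0(0)=0$ and $|(x-bt)_+-x|\le|b|t$ uniformly in $x$, one gets $|P_tf(x)-f(x)|\le\mathbb{E}[\omega_f(|b|t+M^0(t))]$ with $\omega_f$ the (bounded) modulus of continuity of $f$, a bound free of $x$ that vanishes as $t\downarrow 0$.

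For the zero set (take $b>0$; the case $b=0$ of the classical extremal process is read off \eqref{CDFM0allb} in the same way), \eqref{CDFM0bneq0} at $u=0$ reads $\mathbb{P}_0(M(s)=0)=\exp(-\frac{1}{b}\int_0^{bs}\bar\mu(y)\,\ddr y)$, which is positive for every $s>0$ exactly when $\int_0^1\bar\mu(v)\,\ddr v<\infty$, giving the equivalence. When $\int_0^1\bar\mu<\infty$, Fubini gives $\mathbb{E}|\mathcal{Z}\cap[0,1/n]|=\int_0^{1/n}\mathbb{P}_0(M(s)=0)\,\ddr s$, and from $0\le|\mathcal{Z}\cap[0,1/n]|\le 1/n$ one gets $\mathbb{P}(|\mathcal{Z}\cap[0,1/n]|>0)\ge n\int_0^{1/n}\mathbb{P}_0(M(s)=0)\,\ddr s\to 1$, because $s\mapsto\mathbb{P}_0(M(s)=0)$ is continuous at $0$ with value $1$; since the events $\{|\mathcal{Z}\cap[0,1/n]|>0\}$ decrease in $n$, their intersection is almost sure, and on it $|\mathcal{Z}|>0$. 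For the long-term behaviour ($b>0$): by the previous item, for $s>x/b$ one has $\mathbb{P}_x(M(s)\le u)=\exp(-\frac{1}{b}\int_u^{u+bs}\bar\mu(y)\,\ddr y)$, which tends to $0$ for every $u$ when $\int_1^\infty\bar\mu=\infty$ (so $M(s)\to\infty$ in $\mathbb{P}_x$-probability), and to $\pi([0,u])=\exp(-\frac{1}{b}\int_u^\infty\bar\mu(y)\,\ddr y)$ when $\int_1^\infty\bar\mu<\infty$, a non-degenerate distribution function; that $\pi$ is invariant I would check directly by feeding $Y\sim\pi$ independent of $\mathcal{N}$ into \eqref{Mx} and using independence, whence $\mathbb{P}(M^Y(t)\le u)=\pi([0,u+bt])\,F^0_t(u)=\pi([0,u])$. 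Finally, \eqref{Mx} already supplies a synchronous coupling of $M^x$ and $M^Y$ through a common $M^0$, so for $t>x/b$ one has $M^x(t)=M^0(t)$ and $M^x(t)=M^Y(t)$ unless $(Y-bt)_+>M^0(t)\ge 0$, which forces $Y>bt$; the coupling inequality then gives $d_{\mathrm{TV}}(\mathbb{P}_x(M(t)\in\cdot),\pi)\le\mathbb{P}(M^x(t)\neq M^Y(t))\le\mathbb{P}(Y>bt)=1-\exp(-\frac{1}{b}\int_{bt}^\infty\bar\mu)\le\frac{1}{b}\int_{bt}^\infty\bar\mu(u)\,\ddr u$, which also tends to $0$ as $t\to\infty$; this is \eqref{dtvineq} together with the total-variation convergence.

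I do not expect a single decisive obstacle: given \eqref{Mx} and the avoidance formula, the finite-dimensional laws and everything downstream are mostly bookkeeping and one change of variables. The two points that require genuine care are the passage from ``positive probability'' to ``almost sure'' for $|\mathcal{Z}|>0$, which I would handle by the elementary second-moment estimate near $0$ above (or, alternatively, by the $0$--$1$ law for the regenerative set $\overline{\mathcal{Z}}$), and noticing that \eqref{Mx} itself furnishes for free the synchronous coupling that makes the total-variation bound effortless.
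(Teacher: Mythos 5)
Your proposal is correct and, at its core, follows the same route as the paper: the finite-dimensional laws via the Poisson avoidance formula combined with the a.s. constraint $M(s_i)\geq(-bs_i)_+$, part (2) read off the pathwise identity \eqref{Mx}, the Markov property by splitting the defining supremum at time $t$, the limit/stationary law by letting $s\to\infty$ in \eqref{CDFM0bneq0}, and the bound \eqref{dtvineq} by the synchronous coupling through the common $M^0$ (the paper integrates the coupling bound over $\pi(\ddr y)$ as in \eqref{dtv}; your coupling with a single random initial value $Y\sim\pi$ is the same estimate). Three steps genuinely differ, all defensibly. For strong continuity you give the $x$-uniform bound $|P_tf(x)-f(x)|\leq \mathbb{E}[\omega_f(|b|t+M^0(t))]$, whereas the paper only checks convergence in law of $M(t)$ towards $x$ under $\mathbb{P}_x$ and invokes the standard fact that pointwise continuity plus $P_tC_0\subset C_0$ upgrades to uniform convergence (\cite{zbMATH01478492}); yours is self-contained, the paper's is shorter. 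For invariance of $\pi$ you verify it directly by feeding $Y\sim\pi$ into \eqref{Mx} (a one-line computation with \eqref{CDFM0bneq0}), while the paper derives the form of $\pi$ from the stationarity equation and identifies it with the limiting law; both give existence, and total-variation convergence settles uniqueness either way. Most notably, for the a.s. strict positivity of $\lambda(\mathcal{Z})$ in part (4) the paper gives no argument within the proof of the theorem itself: it is only established later, in the proof of Corollary \ref{corollaryrandomcutoutset}, from $\mathbb{E}[\lambda(\mathcal{Z}\cap[0,a])]>0$ together with the zero--one law for regenerative sets (\cite[Proposition 1.8]{subordinatorbertoin}); your first-moment estimate $\mathbb{P}(\lambda(\mathcal{Z}\cap[0,1/n])>0)\geq n\int_0^{1/n}\mathbb{P}_0(M(s)=0)\,\ddr s\to 1$, combined with the monotonicity of the events, is elementary, self-contained, and avoids regenerative-set theory altogether — a small but real improvement. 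One caveat: like the paper (whose proof of parts (4)--(5) starts by assuming $b>0$), you only gesture at $b=0$; there \eqref{CDFM0allb} gives $\mathbb{P}_0(M(s)=0)=e^{-s\bar{\mu}(0)}$, so the criterion becomes $\bar{\mu}(0)<\infty$ rather than $\int_0^1\bar{\mu}(v)\,\ddr v<\infty$ — an issue with the statement's ``$b\geq 0$'' rather than with your argument, but your phrase ``in the same way'' should not be taken literally.
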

We now study the infinitesimal generator of $M$. 

\begin{theorem}[Generator of ESN]\label{thmgeneratorESN}  Let $b\in \mathbb{R}$. Denote by $(\mathcal A,\mathcal{D}(\mathcal{A}))$ the generator of the $\mathrm{ESN}(b,\mu)$ process and its domain. Set 
\begin{align}
\mathcal{D}_0&:=\{f \in C^{1,0}([0,\infty)): \exists \epsilon>0, \text{such that}\  f_{|[0, \epsilon]} \text{ is constant}\},
\label{eqn:ddpositive}
\end{align}
and
\begin{equation}\label{eqn:dd}
\mathcal{D}_1:=\left\{f \in C^{1,0}([0,\infty)):f'(0)=0 \text{ and }\int_0^1|f'(v)|\bar{\mu}(v)\ddr v<\infty\right\}.
\end{equation} 
Then \[\mathcal{D}_0\subset \mathcal{D}_1\subset \mathcal{D}(\mathcal{A}).\]
Moreover $\mathcal{A}$ acts on $\mathcal{D}_1$ as follows: 
\begin{equation}\label{eqn:loggenerator}
    \mathcal Af(x)=\int_x^\infty \Big(f(y)-f(x)\Big)\mu(\ddr y)-bf'(x), \text{ for any } x\geq 0.
    \end{equation}
When $b>0$, $\mathcal{D}_0$ and $\mathcal{D}_1$ are cores for $\mathcal{A}$. When $b\leq 0$ and $\int_0^1\bar{\mu}(v)\ddr v=\infty$, $\mathcal{D}_1$ is a core for $\mathcal{A}$.
\end{theorem}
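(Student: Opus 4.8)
The plan is to read off the structure of the generator from the explicit one-dimensional marginals of Theorem~\ref{thmfddsemigroupESN}, the only delicate point being the behaviour at the boundary $0$.

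The inclusion $\mathcal{D}_0\subseteq\mathcal{D}_1$ is immediate: if $f$ is constant on $[0,\epsilon]$ then $f'(0)=0$ and $\int_0^1|f'(v)|\bar\mu(v)\,\ddr v=\int_\epsilon^1|f'(v)|\bar\mu(v)\,\ddr v<\infty$, since $f'$ is bounded and $\bar\mu$ is nonincreasing and finite on $[\epsilon,1]$. Next, for $f\in\mathcal{D}_1$ I would check that $\mathcal{G}f(x):=\int_x^\infty\big(f(y)-f(x)\big)\mu(\ddr y)-bf'(x)$ is a well-defined element of $C_0([0,\infty))$: writing $f(y)-f(x)=\int_{x\wedge y}^{x\vee y}f'(v)\,\ddr v$ and using Fubini gives $\int_0^1|f(y)-f(x)|\mu(\ddr y)\le\int_0^1|f'(v)|\bar\mu(v)\,\ddr v<\infty$, while on $[1,\infty)$ one uses $\bar\mu(1)<\infty$ and $f\in C_0$; continuity of $\mathcal{G}f$ on $(0,\infty)$ and decay at $\infty$ follow by dominated convergence, and continuity at $0$, with value $\int_0^\infty(f(y)-f(0))\mu(\ddr y)$, uses $f'(0)=0$ and $\int_0^1|f'|\bar\mu<\infty$.

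The main step is $\mathcal{D}_1\subseteq\mathcal{D}(\mathcal{A})$ together with $\mathcal{A}=\mathcal{G}$ on $\mathcal{D}_1$, which amounts to $t^{-1}(P_tf-f)\to\mathcal{G}f$ in $\|\cdot\|_\infty$ as $t\downarrow0$. Here I would use that, by Theorem~\ref{thmfddsemigroupESN}, $M(t)$ under $\mathbb{P}_x$ has distribution function $u\mapsto F^0_t(u)\mathbbm{1}_{\{u\ge(x-bt)_+\}}$, so that
\[
P_tf(x)-f(x)=\big(f((x-bt)_+)-f(x)\big)F^0_t((x-bt)_+)+\int_{((x-bt)_+,\infty)}\big(f(u)-f(x)\big)\,\ddr F^0_t(u).
\]
The first term is $-bt\,f'(x)+o(t)$; for the second one I would use $\ddr F^0_t(u)=F^0_t(u)\frac{1}{b}\big(\bar\mu(u)-\bar\mu(u+bt)\big)\,\ddr u$ on $(0,\infty)$, a shift $u\mapsto u+bt$ in one half of the resulting integral, and the integration-by-parts identity $\int_x^\infty f'(u)\bar\mu(u)\,\ddr u=\int_x^\infty f(u)\mu(\ddr u)-f(x)\bar\mu(x)$, to see that it equals $t\int_x^\infty(f(u)-f(x))\mu(\ddr u)+o(t)$. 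The crux, and the main obstacle of the whole proof, is to make these $o(t)$ uniform in $x\in[0,\infty)$, in particular as $x\downarrow0$ when $\bar\mu(0+)=\infty$ and the jump activity explodes near $0$; the estimate $\bar\mu(x)\int_0^x|f'|\le\int_0^x|f'|\bar\mu\to0$ together with $f'(0)=0$ is exactly what tames the dangerous terms there, while when $b>0$ and $\bar\mu(0+)<\infty$ the point $0$ is holding and the computation at $x=0$ is elementary. It is convenient to first treat $f\in\mathcal{D}_0$, where flatness of $f$ near $0$ removes this difficulty.

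For the core statements I would invoke the classical criterion that a $(P_t)$-invariant subspace of $\mathcal{D}(\mathcal{A})$ which is dense in $C_0([0,\infty))$ is a core. Density of $\mathcal{D}_0$, hence of $\mathcal{D}_1\supseteq\mathcal{D}_0$, follows from the Stone--Weierstrass theorem, $\mathcal{D}_0$ being a subalgebra of $C_0([0,\infty))$ that separates points and vanishes nowhere. For the invariance, a Leibniz-rule cancellation in the explicit formula for $P_tf$ yields $(P_tf)'(x)=f'((x-bt)_+)\,F^0_t((x-bt)_+)$ for $x>0$; when $b>0$ this vanishes on $[0,bt]$, so $P_tf$ is constant near $0$ and $P_t\mathcal{D}_0\subseteq\mathcal{D}_0$, $P_t\mathcal{D}_1\subseteq\mathcal{D}_1$, whence both $\mathcal{D}_0$ and $\mathcal{D}_1$ are cores. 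When $b\le0$ and $\int_0^1\bar\mu=\infty$ the flat piece at $0$ need not be preserved, but $(P_tf)'(0)=f'((-bt)_+)F^0_t((-bt)_+)=0$ because $F^0_t((-bt)_+)=\exp\!\big(-\frac{1}{|b|}\int_0^{|b|t}\bar\mu(y)\,\ddr y\big)=0$ (every neighbourhood of $0$ has infinite $\bar\mu$-integral), and $\int_0^1|(P_tf)'(v)|\bar\mu(v)\,\ddr v\le\|f'\|_\infty\int_0^1F^0_t(v+|b|t)\bar\mu(v)\,\ddr v<\infty$ by the change of variables $w=\int_v^{|b|t/2}\bar\mu(y)\,\ddr y$; hence $P_t\mathcal{D}_1\subseteq\mathcal{D}_1$ and $\mathcal{D}_1$ is a core, the case $b=0$ being analogous.
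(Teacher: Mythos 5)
Your overall computational scheme (decompose $P_tf-f$ through the explicit law of $M(t)$, integrate by parts, and obtain the cores from density of $\mathcal{D}_0$ plus $P_t$-invariance) is the same as the paper's, and your treatment of the core statements is essentially correct. The gap is precisely at the step you yourself call the crux. You set out to prove $t^{-1}(P_tf-f)\to\mathcal{G}f$ \emph{uniformly} on $[0,\infty)$ for every $f\in\mathcal{D}_1$, but you never establish this uniformity; you only gesture at the estimate $\bar\mu(x)\int_0^x|f'(v)|\ddr v\le\int_0^x|f'(v)|\bar\mu(v)\ddr v$. This burden is in fact avoidable: since the semigroup is Feller, the strong generator coincides with the pointwise one (if $f\in C_0$ and $t^{-1}(P_tf(x)-f(x))\to g(x)$ pointwise with $g\in C_0$, then $f\in\mathcal{D}(\mathcal{A})$ and $\mathcal{A}f=g$); the paper invokes exactly this reduction and then only needs pointwise limits. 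As written, you neither use that reduction nor supply the uniform estimates, so the identification of $\mathcal{A}$ on $\mathcal{D}_1$ is asserted rather than proved.

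More seriously, for $b<0$ your plan breaks down at $x=0$ for general $f\in\mathcal{D}_1$, and you offer no way around it. For $b\ge0$ one has the $t$-uniform domination $(1-F^0_t(y))/t\le\bar\mu(y)$, which together with $\int_0^1|f'(v)|\bar\mu(v)\ddr v<\infty$ justifies the limit at the boundary; for $b<0$ the available bound is $\bar\mu(y+bt)=\bar\mu(y-|b|t)$, which is not dominated uniformly in $t$ near the lower endpoint of integration, so the direct passage to the limit at $x=0$ fails for general $f\in\mathcal{D}_1$. You sense this (``it is convenient to first treat $f\in\mathcal{D}_0$''), but you give no mechanism for passing from $\mathcal{D}_0$ to $\mathcal{D}_1$. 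The paper does this via an approximation lemma (every $f\in\mathcal{D}_1$ is a uniform limit of $f_n\in\mathcal{D}_0$ with $f_n'\to f'$ and $|f_n'|\le|f'|$), showing $\mathcal{A}f_n\to\mathcal{G}f$ uniformly by dominated convergence, and then concluding with the closedness of the generator that $f\in\mathcal{D}(\mathcal{A})$ and $\mathcal{A}f=\mathcal{G}f$. Without such an argument (or a genuinely $t$-uniform boundary estimate for $b<0$), the inclusion $\mathcal{D}_1\subset\mathcal{D}(\mathcal{A})$ for $b<0$, and hence also the core claim for $b\le0$, is not established. A smaller point: your density formula $\ddr F^0_t(u)=F^0_t(u)\frac{1}{b}\big(\bar\mu(u)-\bar\mu(u+bt)\big)\ddr u$ presupposes $b\neq0$, so the case $b=0$ of the generator identification also needs a separate Stieltjes-type argument, which the paper's integration by parts covers directly.
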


\begin{remark}\label{remarkgenerator}
A simple use of Fubini-Lebesgue theorem gives the following alternative expression for the generator $\mathcal{A}$. For any $f\in \mathcal{D}_1$ and any $x\geq 0$: 
\begin{equation}\label{alternativeform} \mathcal{A}f(x)=\int_x^{\infty}\bar{\mu}(v)f'(v)\ddr v-bf'(x).
\end{equation}
Notice that $f\in \mathcal{D}_1$ entails $\int_0^1 \bar{\mu}(v)f'(v)\ddr v<\infty$ which ensures that $\mathcal{A}f(0)$ is well-defined.
\end{remark}
\begin{remark}\label{remarksinglecore} When $\int_0^1 \bar{\mu}(x)\ddr x=\infty$ the set $\mathcal{D}_1$ is a core of the $\mathrm{ESN}(b,\mu)$ for any $b\in \mathbb{R}$. Moreover, when $\int_0^1 \bar{\mu}(x)\ddr x=\infty$, the condition $\int_0^1 \bar{\mu}(v)f'(v)\ddr v<\infty$ supersedes $f'(0)=0$. In the case $b\leq 0$ and $\int_0^1\bar{\mu}(v)\ddr v<\infty$, we  only have been able to show that $\mathcal{D}_1$ is a subset of the domain. Finding a core in this case does not seem to follow easily from our approach. 
\end{remark}
In the next theorem, the first passage time of $M$ below any level is studied and the questions whether the process is recurrent or transient and if $0$ is accessible are addressed. For any $a\in [0,\infty)$, we set $\sigma_a:=\inf\{s\geq 0:M(s)\leq a\}$.
\begin{theorem}[First passage times and transience/recurrence]\label{thmfirstpassagetimeM} Let $M$ be an $\mathrm{ESN}(b,\mu)$ with $b>0$. 
\begin{enumerate}
\item Let $\theta>0$. Define for any $x>0$, 
\begin{equation}\label{thetainvariantfunction}
f_{\theta}(x):=\int_{x}^{\infty}e^{-\frac{\theta}{b}s}\exp{\left(\frac{1}{b}\int_{s}^{1}\bar{\mu}(u)\ddr u\right)}\ddr s.
\end{equation}
One has $f_\theta(x)<\infty$ for all $x>0$. For any $x>a>0,$
the Laplace transform of $\sigma_a$ is given by
\[\mathbb{E}_x[e^{-\theta \sigma_a}]=\frac{f_\theta(x)}{f_\theta(a)}.\]
\item  Set \begin{equation}\label{I} \mathcal{I}:=\int^{\infty}_1\exp\left(\frac{1}{b}\int_{s}^1\bar{\mu}(v)\ddr v\right)\ddr s.
\end{equation} We have the following dichotomy: \begin{itemize}

\item If $\mathcal{I}=\infty$, then $M$ is recurrent (i.e. it returns almost surely to any point $a>0$). 

\hspace*{3mm} Moreover,   
\begin{itemize}
\item in the case $\int^\infty_1 \bar{\mu}(v)\ddr v=\infty$,  $M$ is null recurrent,
\item in the case $\int^\infty_1 \bar{\mu}(v)\ddr v<\infty$,  $M$ is positive recurrent.
\end{itemize}
\item If $\mathcal{I}<\infty$, then $M$ is transient (i.e. $M(s)\rightarrow \infty$ a.s.)
\end{itemize}

\item Set \begin{equation}\label{J}\mathcal{J}:=\int_{0}^1\exp\left(\frac{1}{b}\int_{s}^1\bar{\mu}(v)\ddr v\right)\ddr s.
\end{equation} We have the following dichotomy: \begin{itemize}
\item If $\mathcal{J}=\infty$ then $0$ is inaccessible (i.e. $M(s)>0$ for all $s>0$ almost surely). 
\item If $\mathcal{J}<\infty$ then $0$ is accessible (i.e. $M(s)=0$ for some $s>0$ with positive probability).
\end{itemize}
\end{enumerate}
\end{theorem}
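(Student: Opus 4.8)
The plan is to exploit that $M$ is, between jumps, a deterministic decreasing flow with slope $-b$, so the passage problem below a level $a$ reduces to an ODE/scale-function computation for a Feller process whose generator is given explicitly by Theorem \ref{thmgeneratorESN}. I would begin with part (1), the Laplace transform of $\sigma_a$. The natural route is to look for a function $g$ on $(a,\infty)$ solving $\mathcal{A}g = \theta g$ with the boundary normalisation that makes $M_t\mapsto e^{-\theta t}g(M_t)$ a bounded martingale up to $\sigma_a$; then optional stopping gives $\mathbb{E}_x[e^{-\theta\sigma_a}] = g(x)/g(a)$. Using the alternative form \eqref{alternativeform}, $\mathcal{A}g(x)=\int_x^\infty \bar\mu(v)g'(v)\,\ddr v - b g'(x)$, the eigenequation $\mathcal{A}g=\theta g$ can be differentiated once in $x$ to kill the integral: writing $h=g'$ one gets $-\bar\mu(x)h(x) - b h'(x) = \theta h(x)$, i.e. $h'(x)/h(x) = -\frac{1}{b}(\theta+\bar\mu(x))$, whence $h(x) = h(1)\exp\!\big(-\frac{\theta}{b}(x-1) + \frac1b\int_x^1\bar\mu(u)\,\ddr u\big)$ up to a multiplicative constant. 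Choosing the decreasing antiderivative that vanishes at $\infty$ (which forces the sign of the constant and requires checking $h$ is integrable at $\infty$ — this is where $\theta>0$ is used, since $e^{-\theta s/b}$ dominates the typically mild growth of $\exp(\frac1b\int_s^1\bar\mu)$ once one notes $\bar\mu$ is nonincreasing, so $\int_s^1\bar\mu(u)\,\ddr u$ is $O(s)$ with a controllable constant — actually $\le (1-s)\bar\mu(1)$ for $s<1$ and for $s>1$ the integral is negative) yields $g=f_\theta$ as in \eqref{thetainvariantfunction}, and simultaneously gives $f_\theta(x)<\infty$ for all $x>0$. One then verifies $f_\theta\in\mathcal{D}_1$ away from $0$ (or localises on $[a,\infty)$ to avoid the boundary) so that Dynkin's formula applies, and a uniform-integrability argument (using $0<f_\theta\le f_\theta(a)$ on $[a,\infty)$ and $\sigma_a<\infty$ a.s. on the relevant event, plus monotone convergence if $\sigma_a=\infty$ with positive probability, in which case $f_\theta(M_t)\to 0$) closes part (1).

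For parts (2) and (3) I would take $\theta\downarrow 0$ in the identity $\mathbb{E}_x[e^{-\theta\sigma_a}]=f_\theta(x)/f_\theta(a)$. As $\theta\downarrow0$, $f_\theta(x)\uparrow f_0(x):=\int_x^\infty \exp(\frac1b\int_s^1\bar\mu(u)\,\ddr u)\,\ddr s$ by monotone convergence. If $\mathcal{I}=\infty$ then $f_0(x)=\infty$ for every $x>a>0$ (the tail integral near $\infty$ diverges), so $f_\theta(x)/f_\theta(a)\to 1$, giving $\mathbb{P}_x(\sigma_a<\infty)=1$: recurrence. If $\mathcal{I}<\infty$ then $f_0$ is finite, and the ratio $f_0(x)/f_0(a)<1$ for $x>a$; letting also the target level $a\downarrow$ or rather sending the starting point to its natural behaviour, one argues $M_t\to\infty$ a.s. (the standard transience criterion: $\mathbb{P}_x(\sigma_a<\infty)=f_0(x)/f_0(a)<1$ uniformly, combined with the Markov property and the fact that $M$ cannot stay bounded without hitting low levels infinitely often — use that $f_0$ is a bounded nonnegative invariant excessive function that is strictly decreasing, hence $f_0(M_t)$ is a bounded supermartingale converging a.s., which forces $M_t$ to converge, and the only possible limit compatible with $f_0(M_t)\to$ const and no absorption in $(0,\infty)$ is $+\infty$). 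The null/positive recurrence split then follows by comparing with the stationary distribution from Theorem \ref{thmfddsemigroupESN}(5): $\int_1^\infty\bar\mu<\infty$ is exactly the existence of $\pi$, so recurrent $+$ stationary law $=$ positive recurrent, recurrent $+$ no stationary law $=$ null recurrent (invoking the general dichotomy for Feller recurrent processes, or directly computing $\mathbb{E}_a[\sigma_a]$ via $-\frac{\ddr}{\ddr\theta}\big|_{0^+}$ and checking finiteness $\iff \int_1^\infty\bar\mu<\infty$ using $f_0$ and its derivative). Part (3) is the same computation localised at $0$: $\sigma_0=\inf\{s:M(s)=0\}$, and by the strong Markov property $\mathbb{P}_x(\sigma_0<\infty)>0 \iff \mathbb{P}_a(\sigma_0<\infty)>0$ for all $x,a>0$; one takes $a\downarrow 0$ in the Laplace transform identity, getting $\mathbb{E}_x[e^{-\theta\sigma_0}] = f_\theta(x)/f_\theta(0^+)$, and $f_\theta(0^+)<\infty \iff \mathcal{J}<\infty$ (finiteness of the integral near $0$), so $0$ is accessible iff $\mathcal{J}<\infty$.

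The main obstacle I anticipate is the boundary analysis at $0$ and the justification of the limiting procedures: (i) showing that $f_\theta$, which need not lie in $\mathcal{D}_1$ because it may fail $f_\theta'(0)=0$ or the integrability $\int_0^1|f_\theta'|\bar\mu<\infty$ when $\mathcal{J}=\infty$, can nonetheless be used — this is handled by working on $[a,\infty)$ with $a>0$ fixed and only at the very end letting $a\downarrow 0$, so that one only ever needs the generator action on the \emph{interior}, plus the Feller property to pass to the limit; (ii) the interchange of $\lim_{\theta\to0}$ with the expectation, and the identification of $f_\theta(a)\uparrow f_0(a)$ including the case $f_0(a)=\infty$, which needs monotone convergence and a little care that $\sigma_a$ could be $+\infty$; and (iii) upgrading "$\mathbb{P}_x(\sigma_a<\infty)<1$ for all $x>a$" to genuine transience $M_t\to\infty$ a.s., for which the cleanest argument is the bounded-supermartingale convergence of $f_0(M_t)$ together with strict monotonicity of $f_0$ to pin the limit at $+\infty$, rather than a more delicate excursion-theoretic count. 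Everything else is a direct ODE solution and sign-checking, with $\bar\mu$ nonincreasing providing all the needed integrability controls.
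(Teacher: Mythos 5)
Your plan coincides with the paper's proof in its essential structure: the same function $f_\theta$ (the paper verifies $\mathcal{A}f_\theta=\theta f_\theta$ by an integration by parts using \eqref{alternativeform}; you derive it by differentiating the eigen-equation, which is equivalent), the same localisation away from $0$ (the paper replaces $f_\theta$ by a function of $\mathcal{D}_0$ agreeing with it on $[a/2,\infty)$ so that Dynkin's formula applies on $[a,\infty)$), the same passage $t\to\infty$ using the absence of negative jumps to get $M(\sigma_a)=a$, the same monotone-convergence argument as $\theta\downarrow 0$ for recurrence, the same appeal to Theorem \ref{thmfddsemigroupESN}-(5) for the positive/null recurrence split, and the same $a\downarrow 0$ limit in the Laplace transform for part (3).

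The one genuinely different piece is the upgrade from $\mathbb{P}_x(\sigma_a<\infty)<1$ to $M(t)\to\infty$ a.s. The paper combines convergence of $M(t)$ to $\infty$ in probability (Theorem \ref{thmfddsemigroupESN}-(5), applicable because $\mathcal{I}<\infty$ forces $\int_1^\infty\bar{\mu}(v)\ddr v=\infty$) with a Fubini/dominated-convergence bound on $\mathbb{E}_x\big[\mathbbm{1}_{\{M(t)>a\}}\mathbb{P}_{M(t)}(\sigma_a<\infty)\big]$, whereas you propose a.s. convergence of the nonnegative supermartingale $f_0(M(t))$ and identification of the limit. This route works and is more self-contained, but two of your assertions need proof rather than assertion: first, $f_0$ is \emph{excessive}, not invariant, for the semigroup (the paper's remark computes $P_tf_\theta(x)=e^{\theta t}f_\theta(x\vee bt)$, whence $P_tf_0(x)=f_0(x\vee bt)\le f_0(x)$; this computation must actually be carried out, and note $f_0$ is in general unbounded near $0$ when $\mathcal{J}=\infty$, though nonnegativity suffices for supermartingale convergence); second, "the only possible limit is $+\infty$" requires ruling out $M(t)\to c\in[0,\infty)$, e.g.\ because jumps above $c+\epsilon$ keep occurring at the positive rate $\bar{\mu}(c+\epsilon)$, again using $\int_1^\infty\bar{\mu}(v)\ddr v=\infty$ to ensure $\bar{\mu}>0$ everywhere. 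Finally, in part (3) the accessibility direction needs the identification $\lim_{a\downarrow 0}\sigma_a=\sigma_0$, which the paper obtains from quasi-left-continuity of the Feller process together with the absence of negative jumps (so that $M(\sigma_{0+})=0$ on $\{\sigma_{0+}<\infty\}$); since $\lim_{a\downarrow0}\sigma_a\le\sigma_0$ alone only yields the inaccessibility direction, your appeal to "the Feller property to pass to the limit" should be made explicit at exactly this point.
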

Finally, we identify the local time at $0$ of $M$ when $\mathcal{J}<\infty$.
\begin{theorem}[Inverse local time]\label{thminverselocaltime} Let $M$ be the standard ESN process. Assume $\mathcal{J}<\infty$. The point $0$ is regular for itself and the inverse of the local time at $0$ of $M$ is a subordinator $(\tau_x, 0\leq x< \zeta)$ (with possibly a finite lifetime $\zeta$) whose Laplace exponent is $\varphi:\theta\mapsto f_1(0)/f_\theta(0)$, where we have set  $f_\theta(0):=\underset{x\rightarrow 0+}{\lim} f_\theta(x)$ for any $\theta>0$.

Moreover, one has 
\[\overline{\{t\geq 0: M(t)=0\}}=\overline{\{\tau_x: 0\leq x<\zeta\}} \text{ a.s..}\]
\end{theorem}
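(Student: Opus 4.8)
The statement bundles three assertions: $(a)$ the origin is regular for itself; $(b)$ consequently the inverse of its local time is a subordinator (killed when $M$ is transient) whose closed range is $\overline{\{t\ge 0:M(t)=0\}}$; and $(c)$ its Laplace exponent is $\theta\mapsto f_1(0)/f_\theta(0)$. It is convenient to set $h(s):=\exp\big(\frac1b\int_s^1\bar\mu(u)\,\ddr u\big)$, so that $f_\theta(x)=\int_x^\infty e^{-\theta s/b}h(s)\,\ddr s$ and, by \eqref{CDFM0bneq0}, $F^0_s(u)=h(u+bs)/h(u)$ for $u\ge 0$ (recall $b>0$ here). Then $f_\theta(0):=\lim_{x\downarrow 0}f_\theta(x)=\int_0^\infty e^{-\theta s/b}h(s)\,\ddr s$ is finite \emph{exactly} because $\mathcal J<\infty$ (near $0$ the integrand is comparable with $h$, which is integrable there iff $\mathcal J<\infty$; near $\infty$ one has $h\le 1$). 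Finally, letting $a\downarrow 0$ in Theorem \ref{thmfirstpassagetimeM}(1) — using that $M$ descends only along its linear drift, so that $\sigma_a\uparrow\sigma_0$ with $\sigma_0:=\inf\{s>0:M(s)=0\}$ — supplies the key input $\mathbb{E}_x\!\left[e^{-\theta\sigma_0}\right]=f_\theta(x)/f_\theta(0)$ for all $x>0$.

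First I would establish regularity, i.e.\ $\mathbb{P}_0(\sigma_0=0)=1$. Fix $\theta,\delta>0$. On $\{\sigma_0>\delta\}$ the process satisfies $M(\delta)>0$ and $\sigma_0$ equals $\delta$ plus the first return time to $0$ after $\delta$; on $\{\sigma_0\le\delta\}$ one has $e^{-\theta\sigma_0}\ge e^{-\theta\delta}$. Feeding these into the strong Markov property at time $\delta$, together with the key input and the bound $f_\theta\le f_\theta(0)$, one obtains $\mathbb{E}_0[e^{-\theta\sigma_0}]\ge e^{-\theta\delta}\,\mathbb{E}_0\big[f_\theta(M(\delta))/f_\theta(0)\big]$. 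Since $M(\delta)\to 0$ in $\mathbb{P}_0$-probability as $\delta\downarrow 0$ (by \eqref{CDFM0bneq0}, because $\int_u^{u+b\delta}\bar\mu(v)\,\ddr v\to 0$ for each $u>0$) and $f_\theta$ is continuous on $[0,\infty)$ with $f_\theta(0)<\infty$, the right-hand side tends to $1$; hence $\mathbb{E}_0[e^{-\theta\sigma_0}]=1$, that is, $\sigma_0=0$ $\mathbb{P}_0$-a.s., which is regularity of $0$ for itself.

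Once $0$ is regular for itself, the Feller property (Theorem \ref{thmfddsemigroupESN}(3)) lets Itô's excursion theory — equivalently, Maisonneuve's theory of regenerative sets, see \cite{Maisonneuve} — take over: there is a local time $L$ at $0$, unique up to a positive constant, carried by $\overline{\{t:M(t)=0\}}$, and its right-continuous inverse $\tau_x=\inf\{t\ge 0:L_t>x\}$ is a subordinator, killed at rate $\varphi(0+)\ge 0$ and with lifetime $\zeta=L_\infty$. Its closed range equals the closed support of $\ddr L$, which — $L$ increasing at every visit to $0$, by regularity — is precisely $\overline{\{t\ge 0:M(t)=0\}}$; this is the last display of the theorem. (Consistently, $\zeta<\infty$ iff $M$ is transient, i.e.\ $\mathcal I<\infty$ by Theorem \ref{thmfirstpassagetimeM}, matching $\varphi(0+)=f_1(0)/f_0(0)>0\iff f_0(0)=\mathcal J+\mathcal I<\infty$.)

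It remains to compute $\varphi$, which is the heart of the matter and where I expect the real work. I would realize $L$, up to its normalizing constant, as the occupation-time limit $L_t=\lim_{\epsilon\downarrow 0}h(\epsilon)\int_0^t\mathbbm{1}_{\{M(s)\le\epsilon\}}\,\ddr s$ — the standard construction of the local time at a regular point as a limit of normalized occupation times of shrinking neighbourhoods, the hypothesis $\mathcal J<\infty$ being exactly what makes the normalized $\theta$-potentials converge. Using Theorem \ref{thmfddsemigroupESN}(1)--(2) and $F^0_s(u)=h(u+bs)/h(u)$, one gets, for every $x\ge 0$,
\[\mathbb{E}_x\!\left[\int_0^\infty e^{-\theta t}\,\ddr L_t\right]=\lim_{\epsilon\downarrow 0}h(\epsilon)\int_0^\infty e^{-\theta t}F^x_t(\epsilon)\,\ddr t=\int_{x/b}^\infty e^{-\theta t}h(bt)\,\ddr t=\frac1b\,f_\theta(x).\]
On the other hand, since $L$ is carried by $\{0\}$, the strong Markov property at $\sigma_0$ and the change of variables $t=\tau_u$ give $\mathbb{E}_x[\int_0^\infty e^{-\theta t}\ddr L_t]=\mathbb{E}_x[e^{-\theta\sigma_0}]\cdot\mathbb{E}_0[\int_0^\infty e^{-\theta t}\ddr L_t]=\frac{f_\theta(x)}{f_\theta(0)}\cdot\frac{1}{\varphi(\theta)}$. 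Comparing the two expressions yields $\varphi(\theta)=b/f_\theta(0)$, and rescaling $L$ (hence $\tau$) so that $\varphi(1)=1$ gives $\varphi(\theta)=f_1(0)/f_\theta(0)$; that this is a Bernstein function is then automatic from the subordinator structure. (When $\bar\mu(0)<\infty$ one can avoid the occupation-time limit altogether: $L$ is proportional to the time $M$ spends at $0$, $\tau$ is a drift plus a compound Poisson subordinator with jump rate $\bar\mu(0)$ and jump-size Laplace transform $\bar\mu(0)^{-1}\int f_\theta(y)/f_\theta(0)\,\mu(\ddr y)$, and an integration by parts based on $bh'=-\bar\mu h$ shows $\theta f_\theta(0)+\int_0^\infty\!\big(f_\theta(0)-f_\theta(y)\big)\,\mu(\ddr y)$ is independent of $\theta$, recovering the same $\varphi$.) The main obstacle is precisely this identification of $\varphi$: making the occupation-time approximation of $L$, and the attendant interchange of limit and integral, rigorous \emph{uniformly} across the regimes $\bar\mu(0)<\infty$, $\bar\mu(0)=\infty$ with $\int_0^1\bar\mu<\infty$, and $\int_0^1\bar\mu=\infty$ — equivalently, controlling the excursion measure away from $0$, whose entrance law is no longer simply $\mu$ as soon as $\bar\mu(0)=\infty$. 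Everything else (regularity, the abstract subordinator and regenerative-set structure, and the coincidence of the range of $\tau$ with the zero set) is comparatively soft.
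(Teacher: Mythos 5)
Your opening two steps are sound. The regularity argument — bounding $\mathbb{E}_0[e^{-\theta\sigma_0}]\geq e^{-\theta\delta}\,\mathbb{E}_0\big[f_\theta(M(\delta))\big]/f_\theta(0)$ via the Markov property at time $\delta$ and letting $\delta\downarrow 0$ — is correct and rests on the same input as the paper's proof, namely $\mathbb{E}_x[e^{-\theta\sigma_0}]=f_\theta(x)/f_\theta(0)$ obtained by letting $a\downarrow 0$ in Theorem \ref{thmfirstpassagetimeM}-(1) (the paper instead deduces $\mathbb{P}_x(\sigma_0>t)\to 0$ as $x\to 0$ and applies the Markov property at a small time). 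The soft structural part (existence of the local time and of the subordinator $\tau$ once $0$ is regular, and the identity of the closed range of $\tau$ with $\overline{\{t: M(t)=0\}}$) is also fine; the paper handles it by quoting \cite{zbMATH03205726} and \cite[Ch. IV, Thm. 4-(iii)]{Bertoin96}.

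The gap is exactly where you locate it, and it is not cosmetic: your identification of $\varphi$ rests on the assertion that $L_t=\lim_{\epsilon\downarrow 0}h(\epsilon)\int_0^t\mathbbm{1}_{\{M(s)\leq\epsilon\}}\,\ddr s$ exists and is a version of the local time at $0$, and that this limit can be exchanged with the expectation defining the $\theta$-potential. This is not an off-the-shelf fact for a Feller process at a regular boundary point; it is a statement about convergence of additive functionals (or a Revuz-measure/occupation-density argument) that needs its own proof, and it must be carried out precisely in the hard regime $\int_0^1\bar{\mu}(v)\,\ddr v=\infty$, $\mathcal{J}<\infty$, where $h(\epsilon)\to\infty$ while the zero set is Lebesgue-null, so the limit is a genuine $0\times\infty$. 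Your computation of the approximating potentials, $h(\epsilon)\int_0^\infty e^{-\theta t}F^x_t(\epsilon)\,\ddr t\to f_\theta(x)/b$, is correct and consistent with the paper's answer (it differs from $\Psi_1^\theta$ only by the normalizing constant $f_1(0)/b$ of the local time), but passing from convergence of potentials to the identification $\mathbb{E}_x[\int_0^\infty e^{-\theta t}\,\ddr L_t]=f_\theta(x)/b$ is precisely the missing work; your compound-Poisson remark settles only the easy case $\bar{\mu}(0)<\infty$. The paper sidesteps the issue entirely: it uses the Blumenthal--Getoor characterization of the local time through the $\theta$-excessive functions $\Psi_1^\theta=\Phi^1-(\theta-1)U^\theta\Phi^1$, for which $\varphi(\theta)=1/\Psi_1^\theta(0)$, and evaluates $U^\theta\Phi^1(0)$ in closed form from $P_tf_1(x)=e^{t}f_1(x\vee bt)$ (see \eqref{meanftheta}) followed by one integration by parts — no occupation-time approximation is needed. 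If you wish to keep your route, you must either prove the occupation-time approximation (e.g. via an $L^2$ or potential-convergence theorem for natural additive functionals, verifying its hypotheses here) or switch, as the paper does, to a characterization of $L$ that only requires the resolvent of $x\mapsto\mathbb{E}_x[e^{-\sigma_0}]$.
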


The next lemma establishes the connection between extremal shot noise processes and random cutout sets. 

\begin{lemma}[Form of the zero set of ESN]\label{lemmazeroset} Let $b>0$. The closure of the zero set of the standard $\mathrm{ESN}(b,\mu)$ process $(M(t),t\geq 0)$, is of the following form
\[\overline{\{t\geq 0: M(t)=0\}}=[0,\infty)-\bigcup_{s\geq 0}(s,s+\xi_s/b) \text{ a.s..}\]
\end{lemma}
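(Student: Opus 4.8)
The plan is to show the two sets coincide by a double inclusion, working directly from the defining formula $M(t)=\sup_{0\le s\le t}(\xi_s-b(t-s))_+$ with $b>0$. The key observation is that a single atom $(s,\xi_s)$ of $\mathcal{N}$ contributes the function $t\mapsto(\xi_s-b(t-s))_+$, which is strictly positive precisely on the interval $(s,s+\xi_s/b)$ (it equals $\xi_s>0$ at $t=s$ and decays linearly to $0$ at $t=s+\xi_s/b$). Hence if $t\in(s,s+\xi_s/b)$ for some atom, then $M(t)>0$; this immediately gives $\{t\ge 0:M(t)=0\}\subseteq[0,\infty)-\bigcup_{s\ge 0}(s,s+\xi_s/b)$, and since the right-hand side is closed (it is the complement of an open set), the same inclusion holds for the closure of the zero set.

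For the reverse inclusion, let $C:=\bigcup_{s\ge 0}(s,s+\xi_s/b)$ and suppose $t\notin C$; I must show $t$ lies in the closure of the zero set. First I would handle the point $t$ itself: if no atom $(s,\xi_s)$ has $s\le t<s+\xi_s/b$, i.e. $\xi_s\le b(t-s)$ for every atom with $s\le t$, then $(\xi_s-b(t-s))_+=0$ for all such atoms, and taking the supremum over $0\le s\le t$ gives $M(t)=0$ (the atoms with $s>t$ do not enter the sup at time $t$). So every $t\notin C$ is in fact a zero of $M$, and a fortiori in the closure. This actually proves the stronger statement $\{t:M(t)=0\}=[0,\infty)-C$ up to a null set of subtleties at endpoints; the closure on the left is then needed only because $\{t:M(t)=0\}$ itself may fail to be closed at left-endpoints of the covering intervals, where $M$ jumps up — but adding those countably many points back is exactly what passing to the closure on the right-hand side (an already closed set) absorbs.

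The main obstacle is the careful treatment of the boundary points and of the behaviour of the supremum: one must check that the sup defining $M(t)$ is attained or at least that $M(t)=0$ cannot hold when $t$ is interior to some covering interval, and conversely that $M(t)=0$ genuinely holds (not merely $\le 0$) when $t$ escapes all the open intervals $(s,s+\xi_s/b)$. Here the hypothesis $\bar\mu(x)<\infty$ for all $x>0$ is what guarantees that in any bounded time interval only finitely many atoms have $\xi_s$ bounded below, so the relevant suprema are over effectively finite families plus a controllable tail, and no pathological accumulation of the response functions can produce a spurious positive value. I would also invoke right-continuity of $M$ (noted after Definition \ref{ESNdef}) to argue that the set $\{t:M(t)=0\}$ differs from the closed set $[0,\infty)-C$ only by omission of some left-endpoints, so that taking closures makes the two sides literally equal almost surely.
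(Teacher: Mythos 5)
Your first inclusion is correct and is exactly the paper's easy step: if $t\in(s,s+\xi_s/b)$ for some atom then $M(t)>0$, so $\{t:M(t)=0\}$ is contained in the closed set $R:=[0,\infty)-\bigcup_{s\geq 0}(s,s+\xi_s/b)$, and hence so is its closure. The reverse inclusion, however, contains a genuine gap. From $t\notin C:=\bigcup_{s\geq 0}(s,s+\xi_s/b)$ you infer that no atom satisfies $s\le t<s+\xi_s/b$ and conclude $M(t)=0$; but $t\notin C$ only excludes atoms with $s<t$. The case $s=t$ is precisely the problem: if $t$ is itself an atom time $u$ which is not covered by any other atom's open interval, then $u\notin C$ while $M(u)\ge \xi_u>0$. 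What your argument really shows is the exact identity $\{t:M(t)=0\}=[0,\infty)-\bigcup_{s\geq 0}[s,s+\xi_s/b)$ with half-open intervals (this is also the paper's starting point), and the set $R\setminus\{t:M(t)=0\}$ consists exactly of these uncovered atom times; when $\bar{\mu}(0)=\infty$ atom times are dense, so this is not a marginal boundary effect but the entire content of the lemma.

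Your closing remark that ``passing to the closure absorbs those countably many points'' is not a proof. The closure of $\mathcal{Z}=\{t:M(t)=0\}$ contains an uncovered atom time $u$ only if zeros of $M$ accumulate at $u$ (necessarily from the left, since $M>0$ on $[u,u+\xi_u/b)$), i.e.\ only if no left neighborhood $(u-\epsilon,u)$ is entirely covered by the intervals $[s,s+\xi_s/b)$ of earlier atoms; countability of the exceptional set does not give this, since a closed set may strictly contain the closure of its complement-of-a-countable-part (compare $[0,1]\cup\{2\}$ with $[0,1]$). Ruling out such ``left-isolated'' uncovered atom times is the step that must be supplied — the paper does this by analyzing the points of $\overline{\mathcal{Z}}\setminus\mathcal{Z}$ as atom times $u$ with $M(u-)=0$ and comparing them with the covering intervals; an alternative route is a Mecke-formula/Fubini argument showing that almost surely no atom time is uncovered yet left-isolated in the uncovered set. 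Also, your appeal to $\bar{\mu}(x)<\infty$ to control ``accumulation of response functions'' is not where the difficulty lies; the delicate point is exactly the one described above, and it is absent from your proposal.
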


Theorem \ref{thminverselocaltime} and Lemma \ref{lemmazeroset} have for direct corollary, the theorem of \cite{Fitzsimmons},  which characterizes the random cutout set. 
\begin{corollary}[Theorem 1 in \cite{Fitzsimmons}]\label{corollaryrandomcutoutset}  Let $\mu$ be a measure on $(0,\infty)$, such that $\bar{\mu}(x)<\infty$ for any $x>0$, and $\mathcal{N}:=\sum_{s\geq 0}\delta_{(s,\xi_s)}$ be a PPP on $[0,\infty)\times (0,\infty)$ with intensity $\lambda \times \mu$.  Consider the random cutout set \[\mathcal{R}:=[0,\infty)-\bigcup_{s\geq 0}(s,s+\xi_s),\]
we have that
\begin{enumerate}
\item $\mathcal{R}=\{0\}$ a.s. if and only if $\int_0^1\exp\left(\int_s^1\bar{\mu}(v)\ddr v\right)\ddr s=\infty$ (Shepp's criterion, see \cite{zbMATH03378272}).
\item When $\int_0^1\exp\left(\int_s^1\bar{\mu}(v)\ddr v\right)\ddr s<\infty$, $$\mathcal{R}\overset{\mathrm{law}}{=}\overline{\{\tau_x: 0\leq x< \zeta\}},$$ 
where $(\tau_x, 0\leq x< \zeta)$ is a subordinator with lifetime $\zeta\in (0,\infty]$ and  Laplace exponent $\theta\mapsto f_1(0)/f_\theta(0)$. The random set $\mathcal{R}$ is regenerative and is a.s. perfect (the latter means that it has no isolated point). 
\item Furthermore, $\mathcal{R}$ is bounded a.s. if and only if $\int_1^\infty\exp\left(\int_s^1 \bar{\mu}(s)\ddr s\right) \ddr s<\infty$. It has positive Lebesgue measure a.s. if and only if $\int_0^1\bar{\mu}(s)\ddr s<\infty$.  

\end{enumerate}

\end{corollary}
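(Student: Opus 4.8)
The plan is to read off all three assertions from the results already in hand, namely Lemma~\ref{lemmazeroset}, Theorem~\ref{thminverselocaltime}, Theorem~\ref{thmfirstpassagetimeM} and Theorem~\ref{thmfddsemigroupESN}, all specialised to $b=1$; no substantial new argument is needed. Throughout, let $M$ be the standard $\mathrm{ESN}(1,\mu)$ process built from $\mathcal{N}$. Since $\xi_s/b=\xi_s$ when $b=1$, Lemma~\ref{lemmazeroset} gives $\mathcal{R}=\overline{\{t\ge 0:M(t)=0\}}$ almost surely, so every statement about $\mathcal{R}$ becomes a statement about the closed zero set of $M$. With $b=1$ the constants of \eqref{I} and \eqref{J} read $\mathcal{J}=\int_0^1\exp(\int_s^1\bar\mu(v)\,\ddr v)\,\ddr s$ and $\mathcal{I}=\int_1^\infty\exp(\int_s^1\bar\mu(v)\,\ddr v)\,\ddr s$, and $f_\theta(0)=\int_0^\infty e^{-\theta s}\exp(\int_s^1\bar\mu(u)\,\ddr u)\,\ddr s$. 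One elementary fact will be used repeatedly: if $\mathcal{J}<\infty$ then $0<f_\theta(0)<\infty$ for every $\theta>0$ (split at $1$: $\int_0^1\le\mathcal{J}$, and $\int_1^\infty\le\int_1^\infty e^{-\theta s}\,\ddr s$ since $\exp(\int_s^1\bar\mu)\le 1$ for $s\ge 1$), $f_\theta(0)\to 0$ as $\theta\to\infty$ (dominated convergence), and $f_\theta(0)\uparrow\mathcal{J}+\mathcal{I}$ as $\theta\downarrow 0$ (monotone convergence).

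For (1): if $\mathcal{J}=\infty$, Theorem~\ref{thmfirstpassagetimeM}(3) says $0$ is inaccessible, i.e.\ $M(s)>0$ for all $s>0$ a.s., and since $M(0)=0$ this forces $\{t\ge 0:M(t)=0\}=\{0\}$, hence $\mathcal{R}=\{0\}$ a.s. Conversely, if $\mathcal{J}<\infty$ then $f_\theta(0)\in(0,\infty)$, so Theorem~\ref{thminverselocaltime} applies: $0$ is regular for itself and $\mathcal{R}=\overline{\{\tau_x:0\le x<\zeta\}}$ a.s., where $(\tau_x,0\le x<\zeta)$ is a subordinator with Laplace exponent $\varphi(\theta)=f_1(0)/f_\theta(0)$. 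Regularity of $0$ for itself means $\inf\{t>0:M(t)=0\}=0$ a.s., so the zero set of $M$ contains points other than $0$ arbitrarily close to $0$, whence $\mathcal{R}\neq\{0\}$ a.s. This is exactly Shepp's criterion. For (2), keep $\mathcal{J}<\infty$: the identity $\mathcal{R}=\overline{\{\tau_x:0\le x<\zeta\}}$ a.s.\ (a fortiori the asserted equality in law) was just obtained, with the displayed Laplace exponent. That $\mathcal{R}$ is regenerative follows from the general theory of the zero set of a Markov process at a regular point, or equivalently from the regenerative character of the closed range of a subordinator (see \cite{Maisonneuve} and \cite[Chapter~7]{subordinatorbertoin}). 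For perfectness, recall that the closed range of a subordinator has no isolated point unless the subordinator is compound Poisson, i.e.\ has zero drift and finite L\'evy measure; this degenerate case is excluded here because $f_\theta(0)\to 0$ as $\theta\to\infty$, so $\varphi(\theta)=f_1(0)/f_\theta(0)\to\infty$ is unbounded (equivalently: the inverse local time has a strictly positive drift when $\bar\mu(0)<\infty$ and an infinite L\'evy measure when $\bar\mu(0)=\infty$, and in neither case is it compound Poisson).

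For (3), continue to assume $\mathcal{J}<\infty$ — the regime in which $\mathcal{R}$ is non-trivial; when $\mathcal{J}=\infty$ one has $\mathcal{R}=\{0\}$, which is bounded and Lebesgue-null, so (3) is then vacuous. The subordinator $(\tau_x,0\le x<\zeta)$ has a finite lifetime, i.e.\ a positive killing rate $\varphi(0+)=\lim_{\theta\downarrow 0}f_1(0)/f_\theta(0)>0$, which by the first paragraph means $\mathcal{J}+\mathcal{I}<\infty$, equivalently (given $\mathcal{J}<\infty$) $\mathcal{I}<\infty$; and $\mathcal{R}=\overline{\{\tau_x:0\le x<\zeta\}}$ is a.s.\ bounded exactly when $\zeta<\infty$, since an unkilled non-degenerate subordinator runs off to $+\infty$ whereas a killed one is stopped at a finite time and hence has a.s.\ bounded range. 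This proves the first equivalence in (3). Finally set $\mathcal{Z}=\{t>0:M(t)=0\}$. Since $M$ is c\`adl\`ag, any point of $\overline{\mathcal{Z}}\setminus\mathcal{Z}$ is a left-limit point of $\mathcal{Z}$ at which $M$ jumps, hence lies in the countable set of jump times, so $\mathrm{Leb}(\mathcal{R})=\mathrm{Leb}(\overline{\mathcal{Z}})=\mathrm{Leb}(\mathcal{Z})$; by Theorem~\ref{thmfddsemigroupESN}(4) (with $b=1\ge 0$) this is a.s.\ positive when $\int_0^1\bar\mu(v)\,\ddr v<\infty$, and conversely, if $\int_0^1\bar\mu(v)\,\ddr v=\infty$ then by \eqref{CDFM0bneq0} $\mathbb{P}_0(M(s)=0)=\exp(-\int_0^s\bar\mu(v)\,\ddr v)=0$ for every $s>0$, so $\mathbb{E}_0[\mathrm{Leb}(\mathcal{Z})]=0$ by Fubini, forcing $\mathrm{Leb}(\mathcal{Z})=0$ a.s. There is no genuine obstacle in this Corollary: all the substance already sits in Lemma~\ref{lemmazeroset} and Theorems~\ref{thmfddsemigroupESN}, \ref{thmfirstpassagetimeM} and \ref{thminverselocaltime}; the only points demanding a little care are the finiteness, positivity and $\theta\to 0,\infty$ asymptotics of $f_\theta(0)$ in the case $\mathcal{J}<\infty$, and the (innocuous) observation that the boundedness dichotomy in (3) is the one relevant once $\mathcal{R}\neq\{0\}$.
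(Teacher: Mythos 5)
Your proposal is correct and follows essentially the same route as the paper: reduce everything to the closed zero set of the standard $\mathrm{ESN}(1,\mu)$ via Lemma~\ref{lemmazeroset} and then read off the three assertions from Theorems~\ref{thmfddsemigroupESN}, \ref{thmfirstpassagetimeM} and \ref{thminverselocaltime}. The only (harmless) deviations are in two sub-steps: you obtain perfectness and boundedness from properties of the Laplace exponent $\theta\mapsto f_1(0)/f_\theta(0)$ (unboundedness as $\theta\to\infty$, positivity of the killing rate $\varphi(0+)$, equivalent to $\mathcal{I}<\infty$), whereas the paper invokes regularity of $0$ for itself and the transience criterion of Theorem~\ref{thmfirstpassagetimeM}-(2), and you replace the zero--one law for $\lambda(\mathcal{Z})$ by a direct Fubini argument for the converse of the Lebesgue-measure statement; you are also slightly more explicit than the paper about the finiteness of $f_\theta(0)$ and about reading item (3) under the standing assumption $\mathcal{J}<\infty$ of item (2).
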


\begin{remark} ESN processes satisfy the property of max-infinite divisibility, see \cite[Proposition 2.3]{Dombry}, that is to say,
\begin{equation}\label{mid} (M(t),t\geq 0)\overset{\mathrm{law}}{=}\left(\vee_{i=1}^{n} M_i(t), t\geq 0\right),\end{equation} where  $M$ is a standard $\mathrm{ESN}(b,\mu)$ process and the processes $(M_i, i=1,\cdots, n)$ are i.i.d.\  standard $\mathrm{ESN}(b,\frac{1}{n}\mu)$ processes. The identity \eqref{mid} is a direct consequence of the superposition theorem of Poisson point processes.  In terms of the zero-sets, this entails
$$\mathcal{Z}:=\{t\geq 0: M(t)=0\}\overset{\text{law}}{=}\{t\geq 0: \vee_{i=1}^{n}M_i(t)=0\}=\cap_{i=1}^n \mathcal{Z}_i,$$
with $\mathcal{Z}_i:=\{t\geq 0: M_i(t)=0\}$ for $i=1,\cdots, n$. We recover here the fact that the random cutout sets are infinitely divisible for the intersection, see \cite[Section 5]{Fitzsimmons} and \cite{zbMATH07605707}.
\end{remark}
Many explicit examples can be designed by choosing specific tails for the measure $\mu$, see for instance \cite{Fitzsimmons}. 
We first shed light on the ESN process whose inverse local time at $0$ is a stable subordinator and then give some explicit examples of stationary distributions.


\begin{example}[Selfsimilar ESN]\label{selfsimilarESN} Let $b\in \mathbb{R}_+$ and $c>0$. Assume $\bar{\mu}(x)=\frac{c}{x}$ for all $x>0$ and let $M$ be an $\mathrm{ESN}(b,\mu)$. Then
\begin{enumerate}
\item For any $x\geq 0$, $t\geq 0$, 
\[F_t^x(u):=\mathbb{P}_x(M(t)\leq u)=\left(\frac{1}{1+\frac{bt}{u}}\right)^{c/b}\mathbbm{1}_{\{u\geq (x-bt)_+\}}.\]
Moreover, for any $k>0$ and $u\geq 0$, $F_{t/k}^x(u/k)=F_{t}^{kx}(u)$ so that $(kM^x(t/k),t\geq 0)$ has the same law as $(M^{kx}(t),t\geq 0)$, i.e. $M$ is selfsimilar with index $1$.
\item The finite-dimensional law of $M$ satisfies for any $u_1,\cdots, u_n\in \mathbb{R}$, 
\begin{align}\label{finitedimexampleESN}
&\mathbb{P}_0(M(s_1)\leq u_1,\cdots M(s_n)\leq u_n)=\prod_{i=1}^{n}\bigwedge_{j=i}^{n}\left(\frac{u_j+b(s_j-s_{i-1})}{u_j+b(s_j-s_{i})}\right)^{\frac{c}{b}}\mathbbm{1}_{\{u_i\geq (-bs_i)_+\}},
\end{align}
where $0=s_0< s_1< \cdots < s_n$.
\item When $b>0$, Theorem \ref{thmfirstpassagetimeM}-(2) ensures that when $c/b>1$, $M$ is transient, otherwise it is null recurrent. Furthermore, by applying Theorem \ref{thmfirstpassagetimeM}-(3), we see that $0$ is accessible for $M$ if and only if $c/b<1$. In this case, $f_\theta(0)=\theta^{c/b-1}$ for all $\theta>0$ and by Theorem \ref{thminverselocaltime}, the inverse local time is a stable subordinator with index $1-c/b$. 
\end{enumerate} 
\end{example}
\begin{remark}\label{remGWI} The selfsimilar $\mathrm{ESN}(b,\mu)$ process studied in  Example \ref{selfsimilarESN} appears as the functional limit of certain Galton-Watson processes with immigration, see \cite{IKSANOV2018291}. They have shown that if $(Y_n,n\in \mathbb{N})$ is a Galton-Watson process with immigration (GWI) whose offspring distribution has finite mean $m$ and whose immigration distribution, say $\nu$, is such that $\bar{\nu}(n)\underset{n\rightarrow \infty}{\sim} \frac{c}{\log n}$ for some $c>0$ then
\[\left(\frac{1}{n}\big(\log Y_{[ns]}\big)_{+},s\geq 0\right)\underset{n\rightarrow \infty}{\Longrightarrow} (M(s),s\geq 0)\]
where $M$ is a selfsimilar standard ESN\footnote{The definition of the ESN process is slightly different in \cite{IKSANOV2018291}, see the forthcoming Remark \ref{remnopositivepart}} with $b=-\log m \in \mathbb{R}$ and  with $\bar{\mu}(x)=c/x$ for all $x>0$, the convergence holds in the Skorokhod sense and $[ns]$ denoted the integer part of $ns$. The analogue result for continuous-state branching processes with immigration, as well as other functional limit theorems, are established in \cite{foucartyuan2}. The form of the generator given in Theorem \ref{thmgeneratorESN} as well as their cores will play there a crucial role.

\end{remark}
\begin{example}[Stationary distributions of ESNs] Let $M$ be an $\mathrm{ESN}(b,\mu)$ process with $b=1$. Denote by $M(\infty)$ a random variable with law the stationary distribution $\pi$ whenever it exists.
\begin{enumerate}
\item Let $\alpha>1$ and assume $\bar{\mu}(x)=\frac{1}{x^{\alpha}}$ for all $x\geq 0$. Then $M$ does not hit $0$ a.s. (one has $\mathcal{J}=\infty$), is positive recurrent and $1/M(\infty)$ has a Weibull law with shape parameter $\alpha-1$, namely
\[\mathbb{P}(1/M(\infty)\geq y)=\pi([0,1/y])=e^{-\frac{1}{\alpha-1}y^{\alpha-1}}, \text{ for all } y\geq 0.\]  
\item Assume $\bar{\mu}(x)=e^{-x}$ for all $x\geq 0$. Then $M$ hits $0$ a.s. (one has $\bar{\mu}(0)<\infty$), is positive recurrent and $M(\infty)$ has the Gumbel law on $[0,\infty)$, namely
\[\mathbb{P}(M(\infty)\leq x)=\pi([0,x])=e^{-e^{-x}}, \text{ for all } x\geq 0.\]
\item  Assume $\bar{\mu}(x)=\frac{1}{x\log(1/x)}\mathbbm{1}_{\{x<1/e\}}$ for all $x\geq 0$. Then $M$ hits $0$ a.s. ($\bar{\mu}(0)=\infty$ but $\mathcal{J}<\infty$) is positive recurrent and
\[\mathbb{P}(M(\infty)\leq x)=\pi([0,x])=1/\log(1/x) , \text{ for all } 0\leq x\leq 1/e.\]
\end{enumerate}
\end{example}

\section{Study of ESN processes}\label{secproofESN}

We establish here the results of Section \ref{secESN}.
\subsection{Finite-dimensional laws, semigroup and stationary law of $\mathrm{ESNs}$: proof of Theorem \ref{thmfddsemigroupESN}}
\textbf{Proof of Theorem \ref{thmfddsemigroupESN}: (1)}. Recall $\mathcal{N}$ the Poisson point process with intensity $\lambda\times \mu$ and the Poisson construction of $M$ in Definition \ref{ESNdef}. Recall that almost surely for all $s\geq 0$, $M(s)\geq (-bs)_+$. Let $s_1>0$ and $u_1\in [0,\infty)$. The event $\{M(s_1)\leq u_1\}$ coincides almost surely with the event that all atoms $(t,\xi_t)$ of $\mathcal{N}$ on $[0,s_1]$ are such that $(\xi_t-b(s_1-t))_+\leq u_1$. Note that since $u_1\geq 0$, the inequality is equivalent to $\xi_t-b(s_1-t)\leq u_1$, and since any atom $\xi_t$ is positive, it is also equivalent to 
$\xi_t\leq (u_1+b(s_1-t))_+$, a.s..  More generally, for any $s_1<s_2<\cdots <s_n$ and $u_1,\dots, u_n\geq 0$,
\begin{align*}
&\left\{M(s_1)\leq u_1, M(s_2)\leq u_2, \cdots, M(s_n)\leq u_n\right\}\\
&=\left\{\forall t\in [0,s_1], \big(\xi_t-b(s_1-t)\big)_+\leq u_1, \forall t\in [0,s_2], \big(\xi_t-b(s_2-t)\big)_+\leq u_2, \cdots,\right. \\
& \qquad \qquad \qquad \left. \forall t\in [0,s_n], \big(\xi_t-b(s_n-t)\big)_+\leq u_n \right\}\cap \{u_1\geq (-bs_1)_+\}\cap \cdots\cap \{u_n\geq (-bs_n)_+\}\\
&=\big\{\mathcal{N}(A^c)=0\big\}\cap \{u_1\geq (-bs_1)_+\}\cap \cdots\cap  \{u_n\geq (-bs_n)_+\},
\end{align*}
with $A$, obtained by gathering all conditions on each disjoint intervals $(s_i,s_{i+1}]$, given by
\begin{align*}A:=&\left\{
0\leq t< s_1, \xi_t \leq \bigwedge_{i=1}^{n}\big(u_i+b(s_i-t)\big)_+, s_1\leq t< s_2, \xi_t\leq \bigwedge_{i=2}^{n}\big(u_i+b(s_i-t)\big)_+, \cdots ,\right.\\ 
&\qquad \left. s_{n-1}\leq t\leq s_n, \xi_t\leq \big(u_n +b(s_n-t)\big)_+ \right\}.
\end{align*}
Finally, since $\mathcal{N}(A^c)$ is a Poisson random variable with parameter $$(\lambda \times \mu)(A^c)=\sum_{i=1}^{n}\int_{s_{i-1}}^{s_{i}}\bar{\mu}\left(\bigwedge_{j=i}^{n}\Big(u_j+b(s_j-t)\Big)_+\right)\ddr t,$$ we get
\begin{align*}
\mathbb{P}_0(M(s_1)\leq &u_1,\cdots M(s_n)\leq u_n)\\
&=\mathbb{P}(\mathcal{N}(A^c)=0) \mathbbm{1}_{\{u_i\geq (-bs_i)_+, \forall 1\leq i\leq n\}}\\
&=\exp\left( -\sum_{i=1}^{n}\int_{s_{i-1}}^{s_{i}}\bar{\mu}\Big(\bigwedge_{j=i}^{n}\big(u_j+b(s_j-t)\big)\Big)\ddr t\right)\mathbbm{1}_{\{u_i\geq (-bs_i)_+, \forall 1\leq i\leq n\}}.
\end{align*}
The case $n=1, s_1=s,u_1=u$ gives \eqref{CDFM0allb}, namely 
\[F^0_s(u):=\mathbb{P}_0(M(s)\leq u)=\exp\left( -\int_{0}^{s}\bar{\mu}\left(u+b(s-t)\right)\ddr t\right)\mathbbm{1}_{\{u\geq (-bs)_+\}},\] 
and \eqref{CDFM0bneq0} is obtained by change of variable. \qed
\\

\noindent \textbf{Proof of Theorem \ref{thmfddsemigroupESN}: (2)}.
Recall that the ESN process started from $x$ is defined at any time $t$ by \eqref{Mx}, namely
%
$M^x(t)=M^0(t)\vee (x-bt)_+$. One has
\begin{align}\label{cdfMt}
    \mathbb{P}(M^x(t)\leq y)&=F^0_t(y)\mathbbm{1}_{\{y\geq (x-bt)_+\}}=F^0_t(y)\mathbbm{1}_{\{y\geq x-bt\}}.
\end{align}
Since the process $M$ takes only non-negative values,  $F^0_t(y)=0$ if $y<0$ and one can replace in \eqref{cdfMt}, the term $(x-bt)_+$ in the indicator function by $x-bt$. We shall use both writings. The expression in \eqref{cdfMt} is simpler to handle in some calculations. \qed\\

\noindent \textbf{Proof of Theorem \ref{thmfddsemigroupESN}: (3)}. The fact that $M$ satisfies the Markov property is checked as follows. Let $s,t\geq 0$ and $x\geq 0$, then
\begin{align*}
M^x(t+s)&=\sup_{0\leq u\leq t+s \atop \xi_0=x}\left(\xi_u-b(s+t-u)\right)_+\\
&=\sup_{0\leq u\leq t \atop \xi_0=x}\left(\xi_u-b(t-u)-bs\right)_+ \vee \sup_{t\leq u\leq t+s \atop \xi_0=x}\left(\xi_u-b(s+t-u)\right)_+\\
&=(M^x(t)-bs)_+\vee M(t,t+s),
\end{align*}
with $M(t,t+s):=\sup_{t\leq u\leq t+s}(\xi_{u}-b(t+s-u))_+=\sup_{0\leq u\leq s}(\xi_{u+t}-b(s-u))_+$ which is independent from $M^x(t)$. Note that $(M(t,t+s),s\geq 0)$ is a standard $\mathrm{ENS}(b,\mu)$ constructed from the PPP $\mathcal{N}$ shifted by time $t$.

We now check the Feller property. 
\begin{itemize}
\item[a)] Let $f\in C_0([0,\infty))$. We plainly see from \eqref{Mx} that almost surely for any $x_0>0$ and any $t\geq 0$, $M^{x}(t)\rightarrow M^{x_0}(t)$ as $x$ goes to $x_0$. For the case $x_0=0$, recall that $M^0(t)\geq -bt$ almost surely so that $(-bt)_+\vee M^0(t)=M^0(t)$. Therefore by continuity under expectation, the map $P_tf:x\mapsto \mathbb{E}[f(M^x(t)]$ is continuous on $[0,\infty)$. One also has $(x-bt)_+\vee M^0(t)\rightarrow \infty$ a.s. as $x$ goes $\infty$ and since $f$ is bounded, $P_tf(x)=\mathbb{E}\left(f\big((x-bt)_+\vee M^0(t)\big)\right)\underset{x\rightarrow \infty}{\longrightarrow} 0$.
\item[b)] For any $x\in [0,\infty)$, and all $t\geq 0$, by  \eqref{cdfMt} $$\mathbb{P}_x(M(t)\leq u)=\mathbb{P}_0(M(t)\leq u)\mathbbm{1}_{\{x-bt\leq u\}}.$$
   By \eqref{CDFM0allb}, $\mathbb{P}_0(M(t)\leq u){\rightarrow} 1$ as $t$ goes to $0$. Therefore, as $t$ goes to $0$, $\mathbb{P}_x(M(t)\leq u){\rightarrow} \mathbbm{1}_{\{x\leq u\}}$ and $M(t)$ converges in law towards $x$ under $\mathbb{P}_x$. This implies the pointwise continuity of the semigroup for given $x\geq 0$ as $t\to0$, 
   which is equivalent to the uniform one since $P_tC_0([0,\infty))\subset C_0([0,\infty))$, see e.g. \cite[Lemma 6.7 Chapter III]{zbMATH01478492}. \qed
\end{itemize}
\textbf{Proof of Theorem \ref{thmfddsemigroupESN}: (4) and (5)}.
Recall $b>0$. Since we assume \eqref{CDFM0bneq0}, for any $u\geq 0$,  \[F_s^0(u):=\mathbb{P}_0(M(s)\leq u)=\exp\left(-\frac{1}{b}\int_{u}^{u+bs}\bar{\mu}(y)\ddr y\right).\]
\begin{enumerate}
\item By letting $u$ go to $0$ in the expression above, we see that for all $s>0$, $\mathbb{P}_0(M(s)=0)=e^{-\frac{1}{b}\int_0^{bs}\bar{\mu}(v)\ddr v}$. The latter is strictly positive if and only if $\int_0^1 \bar{\mu}(v)\ddr v<\infty$. 
\item By letting $s$ go to $\infty$,  we see that
\[\underset{s\rightarrow \infty}{\lim} \mathbb{P}_0(M(s)\leq u)=\exp\left(-\frac{1}{b}\int_{u}^{\infty}\bar{\mu}(y)\ddr y\right)=\begin{cases} 0 & \text{if } \int_1^\infty \bar{\mu}(y)\ddr y=\infty\\
 >0 &\text{if } \int_1^\infty \bar{\mu}(y)\ddr y<\infty.
\end{cases}\]
Hence, the process converges towards $\infty$ in probability if and only if $\int_1^\infty \bar{\mu}(y)\ddr y=\infty$. Assume now $\int_1^\infty \bar{\mu}(y)\ddr y<\infty$ and let $\pi$ be a stationary distribution. Then for any $y\geq 0$,
\begin{align*} \pi([0,y])=\int_{0}^{\infty}\mathbb{P}_x(M(t)\leq y)\pi(\ddr x)&=\int_{0}^{\infty}\mathbb{P}_0(M(t)\leq y)\mathbbm{1}_{\{y\geq x-bt\}}\pi(\ddr x)\\
&=\mathbb{P}_0(M(t)\leq y)\pi([0,y+bt]).
\end{align*}
By letting $t$ to $\infty$, we see that $\pi$ exists if and only if $M$ admits a limiting distribution and that in this case $\pi$ and the latter coincide. Denote by $d_{\mathrm{TV}}$ the total variation distance and recall the coupling inequality, see e.g. \cite[Chapter 4, Lemma 4.1.11]{Zbl07762391}, 
\begin{equation}\label{dtv}
d_{\mathrm{TV}}\big(\mathbb{P}^{x}(M(t)\in \cdot),\pi(\cdot)\big)\leq \int_{0}^{\infty}\mathbb{P}(M^x(t)\neq M^y(t))\pi(\ddr y),
\end{equation}
with $M^x$ and $M^y$ the processes driven by the same Poisson point process $\mathcal{N}$ satisfying \eqref{Mx}. Note in particular that since $\pi$ is a stationary distribution,  $\int_0^\infty \mathbb{P}(M^y(t)\in \cdot)\pi(\ddr y)=\pi(\cdot)$ for all $t\geq 0$. By construction, if $bt>x\vee y$ then $M^{x}(t)=M^{y}(t)$ a.s.  Hence, when $bt>x$,
\[\int_{0}^{\infty}\mathbb{P}(M^x(t)\neq M^y(t))\pi(\ddr y)\leq \int_{bt}^{\infty}\mathbb{P}(M^x(t)\neq M^y(t))\pi(\ddr y)\leq \bar{\pi}(bt),\]
By the inequality $1-e^{-x}\leq x$, we have for any $t\geq 0$,
\[\bar{\pi}(bt)=1-\exp\left(-\frac{1}{b}\int_{bt}^{\infty}\bar{\mu}(u)\ddr u\right)\leq \frac{1}{b}\int_{bt}^{\infty}\bar{\mu}(u)\ddr u.\]
By plugging this upper bound in \eqref{dtv}, we get the inequality \eqref{dtvineq}. \qed
\end{enumerate} 

\begin{remark}\label{remnopositivepart} We have chosen here to work with \textit{nonnegative} extremal shot noise processes, see the positive parts in \eqref{ESNdef}. It is worth noticing however that if $M$ denotes a standard $\mathrm{ESN}(b,\mu)$ and $\bar{\mu}(0)=\infty$, then almost surely for all $t\geq 0$, \[M(t)=\sup_{0\leq s\leq t}(\xi_s-b(t-s))=:\tilde{M}(t).\] In other words, almost surely the process  $\tilde{M}$ defined above cannot take negative values. Indeed, a similar calculation as in the proof of Theorem \ref{thmfddsemigroupESN}-(1) when establishing \eqref{CDFM0allb}, would provide that for any $\epsilon>0$ and $s>0$, $\mathbb{P}(\tilde{M}(s)\leq -\epsilon)=0$. Since $\tilde{M}$ is c\`adl\`ag and $\epsilon$ is arbitrarily close to $0$, this entails 
$$\mathbb{P}(\exists s>0: \tilde{M}(s)<0)=
\mathbb{P}(\exists \epsilon \in \mathbb{Q}_+^\star, \exists s\in \mathbb{Q}_+^\star: \tilde{M}(s)<-\epsilon)=0,$$
where $\mathbb{Q}_+^\star$ is the set of positive rational numbers.
\end{remark}
\subsection{Infinitesimal generator of $\mathrm{ESNs}$: proof of Theorem \ref{thmgeneratorESN}}\label{secgen}
\begin{proof}[Proof of Theorem \ref{thmgeneratorESN}] 
Since the process is Feller, the generator $\mathcal{A}$  (obtained as the strong derivative of the semigroup) matches with the pointwise infinitesimal generator. In particular its domain is given by
\[\mathcal{D}(\mathcal{A})=\left\{f\in C_0([0,\infty): \exists g \in C_0([0,\infty))\ \forall x\in [0,\infty)\,:\, g(x)=\underset{t\rightarrow 0}{\lim} \frac{P_tf(x)-f(x)}{t}\right\}.\]
We refer e.g. to \cite[Theorem 1.33]{zbMATH06256582}. We shall therefore focus on pointwise convergence. 

Let $f$ be in $C^{1,0}([0,\infty))$. We see from \eqref{cdfMt} that for any $x\geq 0$, the semigroup of $M$ takes the form
\begin{equation}\label{semigroup} \mathbb{E}_x[f\big(M(t)\big)]=f\big((x-bt)_+\big)F_t^0\big((x-bt)_+\big)+\int_{\left((x-bt)_+,\infty\right)}f(y)\ddr F_t^0(y)\end{equation}
where $\ddr F_t^0$ denotes the Stieltjes measure associated to $F_t^0$ restricted on $(0,\infty)$, see \eqref{CDFM0allb}.

One has for all $x\geq 0$
    \begin{align}
        \frac{\mathbb{E}_x[f\big(M(t)\big)]-f(x)}{t}=
        &\frac{1}{t}\int_{(x-bt)_+}^{\infty}(f(y)-f(x))\ddr F^0_t(y) \label{jumppartA}\\
&+\frac{1}{t} \big(f\big((x-bt)_+\big)-f(x)\big)F_t^0\big((x-bt)_+\big). \label{driftpartA}     
        \end{align}
By \eqref{CDFM0allb}, for any $u>0$: \begin{equation}\label{weakconvA}\frac{1}{t}\left(1-F^0_t(u)\right)=\frac{1}{t}\mathbb{P}_0(M(t)>u)=\frac{1}{t}\left(1-e^{-\int_0^{t} \bar{\mu}\big(u+b(t-v)\big)\ddr v}\mathbbm{1}_{\{u\geq (-bt)_+\}}\right)\underset{t\rightarrow 0}{\rightarrow} \bar{\mu}(u).\end{equation}

For any $f\in \mathcal{D}_1$ and any $x\geq 0$, 
\begin{equation}\label{prelimitjumppart}
\int_x^{\infty}\big(f(y)-f(x)\big)\frac{\ddr F_t^{0}(y)}{t}=
\int_x^{\infty}f'(y)\frac{1-F_t^{0}(y)}{t}\ddr y.
\end{equation}
Using the inequality $1-e^{-u}\leq u$, we have for any slope $b\in \mathbb{R}$ and any $y\geq -bt$
\begin{align}
\frac{1-F^0_t(y)}{t}=
\frac{1}{t}\left(1-e^{-\int_0^t\bar{\mu}(y+b(t-v))\ddr v}\right)&\leq \frac{1}{t}\int_0^t\bar{\mu}(y+b(t-v))\ddr v\\
&\leq \bar{\mu}(y)\vee \bar{\mu}(y+bt). \label{bound1}
\end{align}
We study now the jump part \eqref{jumppartA} for $x>0$. By assumption $f$ is continuous and bounded, hence for all $x>0$, and for any small $\epsilon>0$, \[\underset{t\rightarrow 0+}{\limsup} \left|\frac{1}{t}\int_{(x-bt)_+}^{x}(f(y)-f(x))\ddr F^0_t(y)\right|\leq \sup_{y\in [x-\epsilon,x]}|f(y)-f(x)|\mu([x-\epsilon,x]),\]
which converges towards $0\times \mu(\{x\})=0$ as $\epsilon\to0$. Thus for all $x>0$ \begin{equation}\label{bordinteg}
\underset{t\rightarrow 0+}{\lim} \frac{1}{t}\int_{(x-bt)_+}^{x}(f(y)-f(x))\ddr F^0_t(y)=0.
\end{equation} 
Let $t_0>0$ be small enough such that $-bt_0<x$ then 
for any $y\geq x$ and $t\leq t_0$, $\bar{\mu}(y)\vee \bar{\mu}(y+bt)\leq \bar{\mu}(x)\vee \bar{\mu}(x+bt_0)$, hence by \eqref{bound1}
\begin{align}
\frac{1-F^0_t(y)}{t}&\leq\bar{\mu}(x)\vee \bar{\mu}(x+bt_0). \label{bound2}
\end{align}
Since $f\in C_{0}([0,\infty))$, $y\mapsto f'(y)$ is integrable on $(x,\infty)$ and by using the bound \eqref{bound2}, one can apply Lebesgue's theorem in \eqref{prelimitjumppart} to get for any $x>0$,
\begin{equation}\label{convf'}
\int_x^{\infty}f'(y)\frac{1-F_t^{0}(y)}{t}\ddr y\underset{t\rightarrow 0}{\longrightarrow} \int_x^{\infty}f'(y)\bar{\mu}(y)\ddr y.
\end{equation} 
Combining the convergences \eqref{convf'} with \eqref{bordinteg} and applying Fubini-Tonelli's theorem, we finally have for any $x>0$,
    \begin{equation}\label{convintegralpart}
    \underset{t\rightarrow 0+}{\lim} \frac{1}{t}\int_{(x-bt)_+}^{\infty}(f(y)-f(x))\ddr F^0_t(y)=\int_x^\infty(f(y)-f(x))\mu(\ddr y).
    \end{equation}    
We now deal with the second part \eqref{driftpartA}. For any $x>0$, we see from \eqref{CDFM0allb} that \[\mathbb{P}_0(M(t)\leq (x-bt)_+)=F^0_t((x-bt)_+)\underset{t\rightarrow 0}{\rightarrow} 1.\] Since $f$ is differentiable at $x$ we have that 
\[\underset{t\rightarrow 0+}{\lim} \frac{1}{t} (f\big((x-bt)_+\big)-f(x))F_t^0\big((x-bt)_+\big)=\underset{t\rightarrow 0+}{\lim} \frac{1}{t} \big(f(x-bt)-f(x)\big)=-bf'(x).\]
Hence, for any slope $b\in \mathbb{R}$, the generator $\mathcal{A}$ acts on any function $f\in C^{1,0}((0,\infty))$ at $x>0$ as follows:
\begin{equation}\label{formgenerator} \mathcal{A}f(x)=\int_x^{\infty}(f(y)-f(x))\mu(\ddr y)-bf'(x).
\end{equation}    
We are now going to study the convergence of $\frac{1}{t}\left(P_tf(0)-f(0)\right)$ as $t$ goes to $0$. Since the measure $\mu$ does not need to satisfy any integrability condition near $0$ a priori,  the right-hand side in \eqref{formgenerator} might not be well-defined when $x=0$ (even for instance if $f$ is $C^1$ at $0$). The generator at $x=0$ is thus more involved to study. Recall the sets $\mathcal{D}_0$ and $\mathcal{D}_1$ in \eqref{eqn:ddpositive} and \eqref{eqn:dd}. 

In the case $b\geq 0$, by \eqref{bound1}, 
\begin{equation}\label{dominationb>0}
\frac{1-F_t^{0}(y)}{t}\leq \bar{\mu}(y).\end{equation}
Since for any $f\in \mathcal{D}_1$, $\int_{0} |f'(y)|\bar{\mu}(y)\ddr y<\infty$, we get by applying Fubini-Tonelli's theorem and  Lebesgue's theorem, using the domination \eqref{dominationb>0},
\begin{align}
\frac{1}{t}\int_0^{\infty}(f(y)-f(0))\ddr F_t^{0}(y)=\int_0^{\infty}f'(y)\frac{1-F_t^{0}(y)}{t}\ddr y \underset{t\rightarrow 0}{\longrightarrow} 
\int_0^{\infty} \big(f(y)-f(0)\big)\mu(\ddr y).
\label{Af(0)welldefined}
\end{align}
When $b\geq 0$, the prelimit drift term \eqref{driftpartA} vanishes since $(0-bt)_+=0$, and for any $f\in \mathcal{D}_1$ 
\begin{equation}\label{convergencetoAf(0)}
\underset{t\rightarrow 0+}{\lim} \frac{1}{t}\left(P_tf(0)-f(0)\right)=\int_0^{\infty} \big(f(y)-f(0)\big)\mu(\ddr y)=:\mathcal{A}f(0).
\end{equation}
For $f$ to belong to $\mathcal{D}(\mathcal{A})$, $\mathcal{A}f$ should be continuous on $[0,\infty)$ and vanishing at $\infty$. The continuity on $(0,\infty)$ and the fact that $\mathcal{A}f(x)\underset{x\rightarrow \infty}{\longrightarrow} 0$ are clear since $f \in C^{1,0}([0,\infty))$. For the continuity at $0$, recall that by assumption if $f\in \mathcal{D}_1$, then $f'(0)=0$. We plainly see that $\mathcal{A}f(x)\underset{x\rightarrow 0}{\rightarrow}\mathcal{A}f(0)$. We have shown finally that in the case $b\geq 0$, $\mathcal{D}_1\subset \mathcal{D}(\mathcal{A})$. Recall also that $\mathcal{D}_0\subset\mathcal{D}_1$.

In the case $b<0$, we first check the convergence \eqref{Af(0)welldefined} for $f\in \mathcal{D}_0$. Let $\epsilon>0$ such that $f_{|[0,\epsilon]}$ is constant. By \eqref{convintegralpart}, one has
\[\frac{1}{t}\int_0^{\infty}(f(y)-f(0))\ddr F_t^{0}(y)=\frac{1}{t}\int_\epsilon^{\infty}(f(y)-f(0))\ddr F_t^{0}(y)\underset{t\rightarrow 0}{\longrightarrow} \int_0^{\infty}(f(y)-f(0))\mu(\ddr y).\]
Moreover the prelimit drift part \eqref{driftpartA} vanishes, hence the convergence \eqref{convergencetoAf(0)} holds true and $\mathcal{D}_0\subset \mathcal{D}(\mathcal{A})$. We now establish that $\mathcal{D}_1\subset \mathcal{D}(\mathcal{A})$. We shall use the following analytical lemma whose proof is postponed in the Appendix, see Section \ref{Appendix}.

\begin{lemma}[Approximation]\label{lemmaapprox} For any $f\in \mathcal{D}_1$, there is a sequence $(f_n)_{n\geq 1}$ in $\mathcal{D}_0$ such that 
\[f'_n\underset{n\rightarrow \infty}{\longrightarrow} f',\ f_n\underset{n\rightarrow \infty}{\longrightarrow} f \text{ uniformly and } |f'_n|\leq |f'| \text{ for all } n\geq 1.\]
\end{lemma}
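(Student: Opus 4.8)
The plan is to truncate and modify $f$ only near the origin, where the obstruction $f'(0)=0$ lives, so that the modified function is exactly constant on a small interval $[0,\varepsilon_n]$, while leaving $f$ essentially untouched away from $0$. First I would fix a smooth nonincreasing cutoff profile $\chi:[0,\infty)\to[0,1]$ with $\chi\equiv 1$ on $[0,1]$ and $\chi\equiv 0$ on $[2,\infty)$, and set $\chi_n(x):=\chi(nx)$, so that $\chi_n$ is supported in $[0,2/n]$ and equals $1$ on $[0,1/n]$. The candidate is then to \emph{kill the derivative of $f$ near $0$} by defining
\begin{equation*}
f_n'(x):=f'(x)\bigl(1-\chi_n(x)\bigr),\qquad f_n(x):=f(0)+\int_0^x f_n'(y)\,\ddr y.
\end{equation*}
By construction $f_n'\equiv 0$ on $[0,1/n]$, hence $f_n$ is constant on $[0,1/n]$, so $f_n\in C^{1,0}([0,\infty))$ with $f_n'(0)=0$; moreover $f_n'$ agrees with $f'$ on $[2/n,\infty)$, so $f_n'$ vanishes at $\infty$ and $|f_n'|=|f'|\,|1-\chi_n|\le |f'|$ pointwise, giving the domination bound for free. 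The pointwise convergence $f_n'(x)\to f'(x)$ for each fixed $x>0$ is immediate since $\chi_n(x)=0$ once $n>2/x$; at $x=0$ both sides are $0$.

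The only genuine work is the \emph{uniform} convergence $f_n\to f$. Here I would write, for any $x\ge 0$,
\begin{equation*}
\bigl|f_n(x)-f(x)\bigr|
=\left|\int_0^x f'(y)\,\chi_n(y)\,\ddr y\right|
\le \int_0^{2/n}|f'(y)|\,\ddr y,
\end{equation*}
which is a bound uniform in $x$. The right-hand side tends to $0$ as $n\to\infty$ precisely because $f'$ is integrable near $0$: this is where I would invoke $f\in C^{1,0}([0,\infty))$ together with $f'(0)=0$, which makes $f'$ bounded near $0$, hence $\int_0^{2/n}|f'(y)|\,\ddr y\le \|f'\|_{\infty,[0,1]}\cdot(2/n)\to 0$. (One does not even need the $\bar\mu$-integrability condition from \eqref{eqn:dd} for this lemma; it is used elsewhere.) This also re-proves $f_n\to f$ uniformly without appealing to dominated convergence, which is cleaner since no fixed dominating function on all of $[0,\infty)$ is needed.

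It remains to confirm $f_n\in\mathcal{D}_0$ as defined in \eqref{eqn:ddpositive}: $f_n$ lies in $C^{1,0}([0,\infty))$ (it is $C^1$ because $f_n'$ is continuous, it vanishes at $\infty$ because $f_n(x)-f(x)\to 0$ uniformly and $f(x)\to 0$, and $f_n'\to 0$ at $\infty$ because it coincides with $f'$ there), and it is constant on $[0,1/n]$, so $\varepsilon=1/n$ witnesses membership in $\mathcal{D}_0$. I expect the main (minor) obstacle to be bookkeeping the behaviour at $0$ versus at $\infty$ simultaneously — in particular making sure the cutoff is placed at the origin (not at infinity) and that the resulting $f_n$ still decays at infinity — but the estimate $|f_n(x)-f(x)|\le \int_0^{2/n}|f'|$ handles everything in one stroke.
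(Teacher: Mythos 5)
Your construction is essentially the paper's: kill $f'$ on $[0,1/n]$, modify it only on $[1/n,2/n]$, and integrate to recover $f_n$; the paper uses a piecewise-linear interpolation of $f'$ followed by a truncation step to force $|f_n'|\leq |f'|$, whereas your multiplicative cutoff $f_n'=f'(1-\chi_n)$ gives the domination automatically, which is slightly cleaner. Both proofs use only continuity of $f'$ and $f'(0)=0$, not the $\bar\mu$-integrability, as you correctly note.

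Two points need repair in your write-up. First, your justification that $f_n$ vanishes at $\infty$ is wrong as stated: for \emph{fixed} $n$ one has $f_n(x)-f(x)=-c_n$ with $c_n:=\int_0^{2/n}f'(y)\chi_n(y)\,\ddr y$ for all $x\geq 2/n$, so $f_n(x)\to -c_n$ as $x\to\infty$, which is in general nonzero; uniform smallness in $n$ does not give decay in $x$, so $f_n\notin C_0([0,\infty))$ with your anchoring at $f(0)$. The fix is one line: anchor on the right instead, i.e.\ set $f_n:=f$ on $[2/n,\infty)$ and $f_n(x):=f(2/n)-\int_x^{2/n}f'(y)\bigl(1-\chi_n(y)\bigr)\,\ddr y$ for $x\leq 2/n$ (equivalently, add the constant $c_n$ to your $f_n$); then $f_n\in\mathcal{D}_0$, $|f_n'|\leq|f'|$ persists, and $\|f_n-f\|_\infty\leq 2\int_0^{2/n}|f'(y)|\,\ddr y\to 0$. (The paper's own proof anchors at $f(0)$ as well, so it shares this cosmetic slip.) Second, you only verify pointwise convergence of $f_n'$ to $f'$, while the paper proves uniform convergence, and uniformity is what the subsequent closedness argument needs, since the drift term $|b|\,|f_n'(x)-f'(x)|$ must tend to $0$ uniformly in $x$. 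Your construction delivers it for free: $\|f_n'-f'\|_\infty=\sup_{y\in[0,2/n]}|f'(y)\chi_n(y)|\leq\sup_{y\in[0,2/n]}|f'(y)|\to 0$ by continuity of $f'$ and $f'(0)=0$; you should state this explicitly.
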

\noindent Providing the sequence $(f_n)_{n\geq 1}$, we show that  $\mathcal{A}f_n\underset{n\rightarrow \infty}{\longrightarrow} g$ uniformly with $g$ given by \eqref{formgenerator}. For any $a>0$ and $x\geq 0$,
\begin{align*}
\lvert \mathcal{A}f_n(x)-g(x) \lvert \leq \int_a^{\infty}|f'_n-f'|(y) \bar{\mu}(y)\ddr y+\int_0^a|f'_n-f'|(y) \bar{\mu}(y)\ddr y+|b||f'(x)-f'_n(x)|.
\end{align*}
Since $|f'_n|\leq |f'|$ and $f'$ is integrable near $\infty$ with respect to $\bar{\mu}(y)\ddr y$ (note that $f'$ is integrable near $\infty$ since $f\in C_0((0,\infty))$ and $\bar{\mu}(y)\leq \bar{\mu}(a)$ for any $y\geq a$), by applying Lebesgue's theorem, we have that
\[\int_a^{\infty}|f'_n-f'|(y) \bar{\mu}(y)\ddr y\underset{n\rightarrow \infty}{\longrightarrow} 0.\]
By assumption $f\in \mathcal{D}_1$ and thus $f'$ is integrable near $0$ with respect to $\bar{\mu}(y)\ddr y$. As previously,  an application of Lebesgue's theorem, using the domination $|f'_n|\leq |f'|$, provides
\[\int_0^a|f'_n-f'|(y) \bar{\mu}(y)\ddr y \underset{n\rightarrow \infty}{\longrightarrow} 0.\]
Finally, the operator $\mathcal{A}$ being closed, see 
e.g. \cite[Lemma 19.8]{Kallenberg}, we have $f=\underset{n\rightarrow \infty}{\lim} f_n\in \mathcal{D}(\mathcal{A})$ and $\mathcal{A}f=\underset{n\rightarrow \infty}{\lim} \mathcal{A}f_n=g$. Hence $\mathcal{D}_1\subset \mathcal{D}(\mathcal{A})$ also when $b<0$.

We now study the core properties. A sufficient condition for a set of functions $\mathcal{D}\subset C_0([0,\infty))$ to be a core is that $\mathcal{D}$ is dense in $C_0([0,\infty))$ and  $P_t\mathcal{D} \subset \mathcal{D}$ for any $t\geq 0$. We refer for instance to \cite[Proposition 19.9]{Kallenberg}.

We treat first the case $b>0$. By \eqref{semigroup} for any function\footnote{This actually holds for any $f\in C_0([0,\infty))$} $f\in \mathcal{D}_1$, $P_tf(x)=P_tf(0)$ for $x\leq bt$. Hence the function $P_tf$ is constant near $0$.  
Since by assumption $f\in C^{1,0}([0,\infty))$, we see from \eqref{semigroup} and \eqref{CDFM0bneq0} that $x\mapsto P_tf(x)$ is differentiable on $[0,\infty)$ and for any $x\geq 0$
\[(P_tf)'(x)=\begin{cases}
f'(x-bt)F_t^{0}(x-bt)& \text{ if } x>bt\\
0& \text{ if } x\leq bt.
\end{cases}
\]
Plainly $P_tf$ and $(P_tf)'$ vanish at $\infty$ when $f\in \mathcal{D}_1$. Furthermore for any $t>0$, $(P_tf)'$ is continuous in $x$ for all $x>bt$ and all $x<bt$. For checking the continuity at $x=bt$, we use the assumption $f'(0)=0$ for any $f\in \mathcal{D}_1$, indeed 
\begin{equation}
 \label{continuityatbt}\underset{s\rightarrow 0 \atop s>0}{\lim} (P_tf)'(bt+s)=f'(0)F^0_t(0)=0=(P_tf)'(bt).
\end{equation}
Finally, we see that for any $t\geq 0$,
$P_t\mathcal{D}_0\subset P_t\mathcal{D}_1\subset \mathcal{D}_0\subset \mathcal{D}_1$ and $\mathcal{D}_0$ being dense in $C_0([0,\infty))$, both sets $\mathcal{D}_0$ and $\mathcal{D}_1$ are cores for $\mathcal{A}$ when $b>0$.  

We treat now the case $b\leq 0$. Recall also that we work under the assumption $\int_0 \bar{\mu}(x)\ddr x=\infty$. 
Similarly as before, we see from \eqref{semigroup} that for any $f\in C^{1,0}([0,\infty))$, $x\mapsto P_tf(x)$ is differentiable on $[0,\infty)$ and 
\[(P_tf)'(x)=f'(x-bt)F_t^{0}(x-bt).\]
This is clearly a continuous function on $[0,\infty)$ vanishing at $\infty$. Therefore $P_tf\in C^{1,0}([0,\infty))$.
We check now that $\int_{0}|(P_tf)'(x)|\bar{\mu}(x)\ddr x<\infty$. Note that this entails $(P_tf)'(0)=0$ since $\int_0 \bar{\mu}(x)\ddr x=\infty$. By \eqref{CDFM0allb}, we have for any $t\geq 0$ and $x\geq 0$, $F_t^0(x-bt)=\exp\left(-\int_0^t \bar{\mu}(x-br)\ddr r\right)$. 

In the case $b=0$, one has for some $\theta>0$ fixed and any $t>0$,
$$\int_{0}^{\theta}|(P_tf)'(x)|\bar{\mu}(x)\ddr x\leq ||f'||_\infty \int_0^{\theta} e^{-\bar{\mu}(x)t}\bar{\mu}(x)\ddr x\leq ||f'||_\infty \int_0^{\theta}\frac{\bar{\mu}(x)}{1+\bar{\mu}(x)}\ddr x<\infty.$$

In the case $b<0$, one has by \eqref{CDFM0bneq0}, for any $t>0$ and $x\geq 0$, $F_t^0(x-bt)=\exp\left(-\int_x^{x-bt} \bar{\mu}(y)\frac{\ddr y}{-b}\right)$ and thus for some  $0<\theta<-bt$ fixed:
\begin{align*}
\int_{0}^{\theta}|(P_tf)'(x)|\bar{\mu}(x)\ddr x&\leq ||f'||_\infty\int_0^{\theta} \exp\left(-\int_x^{x-bt}\bar{\mu}(y)\frac{\ddr y}{-b}\right)\bar{\mu}(x)\ddr x\\
&=-b||f'||_\infty \int_0^{\theta} \exp\left(-\int_x^{\theta}\bar{\mu}(y)\frac{\ddr y}{-b}\right)\exp\left(-\int_{\theta}^{x-bt}\bar{\mu}(y)\frac{\ddr y}{-b}\right)\frac{\bar{\mu}(x)}{-b}\ddr x\\
&\leq -b||f'||_\infty \exp\left(-\int_{\theta}^{-bt}\bar{\mu}(y)\frac{\ddr y}{-b}\right)\int_0^{\theta} \exp\left(-\int_x^{\theta}\bar{\mu}(y)\frac{\ddr y}{-b}\right)\frac{\bar{\mu}(x)}{-b}\ddr x\\
&=C\left[\exp\left(-\int_x^{\theta}\bar{\mu}(y)\frac{\ddr y}{-b}\right)\right]_{x=0}^{x=\theta}<\infty,
\end{align*}
with $C:=-b||f'||_\infty e^{-\int_{\theta}^{-bt}\bar{\mu}(y)\frac{\ddr y}{-b}}$.

Finally, $P_t\mathcal{D}_1 \subset \mathcal{D}_1$ for any $t\geq 0$ and $\mathcal{D}_1$ is a core for $\mathcal{A}$ when $b\leq 0$ and $\int_0 \bar{\mu}(x)\ddr x=\infty$. As mentioned in Remark \ref{remarksinglecore}, observe that under the assumption $\int_0^1\bar{\mu}(v)\ddr v=\infty$ (non-sticky case), the set $\mathcal{D}_1$ is a core for the $\mathrm{ESN}(b,\mu)$ for any $b\in \mathbb{R}$.
\end{proof}

\subsection{First passage times, transience, recurrence and zero set : proofs of Theorem \ref{thmfirstpassagetimeM} and Corollary \ref{corollaryrandomcutoutset}}
In all this section, we consider an $\mathrm{ESN}(b,\mu)$ with negative slopes, i.e. $b>0$. Notice that in the case $b\leq 0$, the process has almost-surely non-decreasing sample paths and questions to be addressed in this section are pointless.\\

\noindent \textbf{Proof of Theorem \ref{thmfirstpassagetimeM}-(1)}. Let $\theta>0$ and set for any $x>0$:
\begin{equation}\label{candidatethetainvariantfunction}
f_{\theta}(x):=\int_{x}^{\infty}e^{-\frac{\theta}{b}s}\exp{\left(\frac{1}{b}\int_{s}^{1}\bar{\mu}(u)\ddr u\right)}\ddr s.
\end{equation}
For any $x>0$, there is a constant  $C(x)>0$ such that $f_\theta(x)\leq C(x)\int_{x}^{\infty}e^{-\frac{\theta +\bar{\mu}(x)}{b}s}\ddr s<\infty$. Moreover, for all $x>0$, $f'_\theta(x)=-e^{-\frac{\theta }{b}x}\exp\left(\frac{1}{b}\int_x^1\bar{\mu}(u)\ddr u\right)$ and$f'_\theta \in C_{0}((0,\infty))$.
Recall the generator of $M$, $\mathcal{A}$ in \eqref{formgenerator}, and the form \eqref{alternativeform}. We verify now that $f_\theta$ is $\theta$-invariant for $\mathcal{A}$, i.e. $\mathcal{A}f_\theta=\theta f_\theta$.  One has for $x>0$,
\begin{align*}
    \mathcal{A}f_\theta(x)&=\int_x^{\infty}\bar{\mu}(v)f'_\theta(v)\ddr v -bf'_\theta(x)\\
    &=\int_{x}^{\infty}\big(-\bar{\mu}(v)\big)e^{-\frac{\theta}{b} v}\exp\left(\frac{1}{b}\int_v^1\bar{\mu}(u)\ddr u\right)\ddr v+be^{-\frac{\theta}{b}x}\exp\left(\frac{1}{b}\int_x^1\bar{\mu}(u) \ddr u\right)\\
    &=\left[be^{-\frac{\theta}{b}v}\exp\left(\frac{1}{b}\int_v^1\bar{\mu}(u)\ddr u\right)\right]_{v=x}^{\infty}+\int_{x}^{\infty}\theta e^{-\frac{\theta}{b} v}\exp\left(\frac{1}{b}\int_v^1\bar{\mu}(u)\ddr u\right)\ddr v\\
    &\qquad\qquad\qquad\qquad\qquad\qquad\qquad\qquad\qquad+be^{-\frac{\theta}{b}x}\exp\left(\frac{1}{b}\int_x^1\bar{\mu}(u) \ddr u\right)\\
    &=\theta \int_x^\infty e^{-\frac{\theta v}{b}}\exp\left(\frac{1}{b}\int_v^1\bar{\mu}(u)\ddr u\right)=\theta f_\theta(x),
\end{align*}
where in the third equality we have performed an integration by parts, together with the fact that $e^{-\frac{\theta}{b} v}\exp\left(-\frac{1}{b}\int_1^v\bar{\mu}(u)\ddr u \right)\underset{v\rightarrow \infty}{\rightarrow} 0$.

Let $\tilde f_\theta$ be a in $C^{1,0}([0,\infty))$ such that $\tilde f_\theta(v)=f_\theta(v)$ for any $ v\in [a/2,\infty)$ and $\tilde f_\theta$ is a constant on $[0,a/3]$. Then $\tilde f_\theta\in \mathcal{D}_0$, see \eqref{eqn:ddpositive}, and in particular by Theorem \ref{thmgeneratorESN} is in the domain of $\mathcal A$. Moreover, for any $x\geq a$, $\mathcal A \tilde{f}_\theta(x)=\mathcal A f_\theta(x)$.  Then by applying Dynkin's formula, see  \cite[(10.11), Chapter III.10, page 254]{zbMATH01478492} we have, for any $x>a$.  
\begin{align*}\E_x[e^{-\theta \sigma_a\wedge t}f_\theta(M(\sigma_a\wedge t))]- f_\theta(x)&=\E_x[e^{-\theta \sigma_a\wedge t} \tilde f_\theta(M(\sigma_a\wedge t))]-\tilde f_\theta(x)\\
&=\E_x\left[\int_0^{\sigma_a\wedge t} e^{-\theta s}(\mathcal{A}\tilde f_\theta-\theta \tilde f_{\theta})(M(s))\ddr s\right]\\
&=\E_x\left[\int_0^{\sigma_a\wedge t} e^{-\theta s}(\mathcal{A} f_\theta-\theta  f_{\theta})(M(s))\ddr s\right]=0.\end{align*}
Since the process $M$  has no negative jumps, one has almost surely $M(\sigma_a)=a$ on the event $\{\sigma_a<\infty\}$. The function $f_\theta$ being continuous, we get by letting $t$ go to $\infty$:
\begin{equation}\label{equationthetainvariantfunc} \mathbb{E}_x[e^{-\theta \sigma_a}\mathbbm{1}_{\{\sigma_a<\infty\}}]= \mathbb{E}_x[e^{-\theta \sigma_a}]=f_\theta(x)/f_\theta(a).\end{equation}
\qed
\\
\textbf{Proof of Theorem \ref{thmfirstpassagetimeM}-2).}
We now study the recurrence and the transience of the process. 

Recall 
$\mathcal{I}:=\int^{\infty}_1\exp\left( \frac{1}{b}\int_{s}^1\bar{\mu}(v)\ddr v\right)\ddr s$ and
define the function $g(s):=\exp\left( \frac{1}{b}\int_{s}^1\bar{\mu}(v)\ddr v\right)$ for all $s>0$ so that $\mathcal{I}=\int_1^{\infty}g(s)\ddr s$ and $f_\theta(x)=\int_x^{\infty}e^{-\frac{\theta}{b}s}g(s)\ddr s$ for all $x>0$. Moreover one has $\int_a^{1}g(s)\ddr s\leq e^{\frac{\theta}{b}}f_\theta(a)<\infty$ for all $a>0$ and by  \eqref{equationthetainvariantfunc}:
\begin{align}
\mathbb{E}_x[e^{-\theta \sigma_a}]=\frac{f_\theta(x)}{f_\theta(a)}&=\frac{\int_x^1 e^{-\frac{\theta}{b}s}g(s)\ddr s+ \int_1^\infty e^{-\frac{\theta}{b}s}g(s)\ddr s}{\int_a^1 e^{-\frac{\theta}{b}s}g(s)\ddr s+ \int_1^\infty e^{-\frac{\theta}{b}s}g(s)\ddr s}\nonumber \\
&=\frac{\frac{\int_x^1 e^{-\frac{\theta}{b}s}g(s)\ddr s}{\int_1^\infty e^{-\frac{\theta}{b}s}g(s)\ddr s}+ 1}{\frac{\int_a^1 e^{-\frac{\theta}{b}s}g(s)\ddr s}{\int_1^\infty e^{-\frac{\theta}{b}s}g(s)\ddr s}+1}.\label{fraction}
\end{align}
\begin{enumerate}
\item Recurrence: assume  $\mathcal{I}=\infty$.  One has $\int_1^{\infty}g(s)\ddr s=\infty$ and by monotone convergence, we have
\[f_\theta(1)=\int_1^{\infty}e^{-\frac{\theta}{b}s}g(s)\ddr s\underset{\theta \rightarrow 0^+}{\rightarrow} \mathcal{I}=\infty,\]
and $\int_a^1 e^{-\frac{\theta}{b}s}g(s)\ddr s\underset{\theta \rightarrow 0^+}{\rightarrow} \int_a^1 g(s)\ddr s<\infty$. Hence, by letting $\theta$ go to $0$ in \eqref{fraction}, we get 
\[\mathbb{P}_x(\sigma_a<\infty)=\underset{\theta \rightarrow 0^+}{\lim} \mathbb{E}_x[e^{-\theta \sigma_a}]= \underset{\theta \rightarrow 0^+}{\lim}\frac{f_\theta(x)}{f_\theta(a)}=1.\]
\item Transience: assume $\mathcal{I}<\infty$.  One has $\int_1^{\infty}g(s)\ddr s<\infty$. By letting $\theta$ go to $0$ in \eqref{fraction}, we see that 
\begin{equation}\label{probahitting}
\mathbb{P}_x(\sigma_a<\infty)=\underset{\theta \rightarrow 0+}{\lim}\frac{f_\theta(x)}{f_\theta(a)}<1.
\end{equation}
It remains to show that the process $M$ goes to $\infty$ a.s.. Denote by $\theta_t$ the time shift operator, i.e. $\theta_t(M(\cdot))=M(t+\cdot)$. For any $a>0$, one has
\begin{align}\label{transiencedecompose}
\mathbb{P}_x(\underset{t\rightarrow \infty}{\liminf}\, M(t)<a)&\leq \underset{t\rightarrow \infty}{\liminf}\,\mathbb{P}_x(\sigma_a\circ \theta_t<\infty)\nonumber \\
&=\underset{t\rightarrow \infty}{\liminf}\,\mathbb{E}_x\left[\mathbb{P}_{M(t)}(\sigma_a<\infty)\right].
\end{align}
Moreover
\begin{align*}
\mathbb{E}_x\left[\mathbb{P}_{M(t)}(\sigma_a<\infty)\right]&\leq \mathbb{P}_{x}(M(t)\leq a)+\mathbb{E}_x\left[\mathbbm{1}_{\{M(t)>a\}}\mathbb{P}_{M(t)}(\sigma_a<\infty)\right]\\
&=:\mathrm{I}+\mathrm{II}.
\end{align*}
Note that the condition $\mathcal{I}<\infty$ implies that $\int^{\infty}\bar{\mu}(u)\ddr u=\infty$ and by Theorem \ref{thmfddsemigroupESN}-(3), in this case the process goes to $\infty$ in probability. 
The first term $\mathrm{I}$ at the right-hand side in \eqref{transiencedecompose} goes towards $0$ as $t$ goes to $\infty$. We study now the second term $\mathrm{II}$. By the assumption $\mathcal{I}<\infty$, we see that for any $a>0$, $\theta\mapsto f_\theta(a)$ is well-defined and continuous at $\theta=0$. Recall $f_0(x)=\int_x^\infty g(s)\ddr s$ for all $x\geq 0$. One has by \eqref{probahitting} and  by applying Fubini-Tonelli theorem in the last inequality,
\begin{align*}
\mathbb{E}_x[\mathbbm{1}_{\{M(t)>a\}}\mathbb{P}_{M(t)}(\sigma_a<\infty)]&=\mathbb{E}_x\left[\mathbbm{1}_{\{M(t)>a\}}\frac{f_0\big(M(t)\big)}{f_0(a)}\right]\\
&=\frac{1}{f_0(a)}\mathbb{E}_x \left[\int_{0}^\infty \mathbbm{1}_{\{a<M(t)\leq s\}}g(s)\ddr s\right]\\
&\leq \frac{1}{f_0(a)}\int_{a}^{\infty}\mathbb{P}_x(M(t)\leq s)g(s)\ddr s.
\end{align*}
By dominated convergence, since $\mathbb{P}_x(M(t)\leq s)\underset{t\rightarrow \infty}{\longrightarrow} 0$, we have that 
\[\mathrm{II}:=\mathbb{E}_x[\mathbbm{1}_{\{M(t)>a\}}\mathbb{P}_{M(t)}(\sigma_a<\infty)]\underset{t\rightarrow \infty}{\longrightarrow} 0.\]
Therefore 
$\mathbb{P}_x(\underset{t\rightarrow \infty}{\liminf}\, M(t)<a)=0$ and since $a$ can be arbitrarily large, $\underset{t\rightarrow \infty}{\liminf}\, M(t)=\infty$ a.s. \qed
\end{enumerate}
\textbf{Proof of Theorem \ref{thmfirstpassagetimeM}-3)}. Recall $\mathcal{J}=\int_{0}^1\exp\left(\frac{1}{b}\int_{s}^1\bar{\mu}(v)\ddr v\right)\ddr s$. Given an initial value $x>0$, for any $x>a_1>a_2>0$, we have that $\sigma_{a_1}\leq \sigma_{a_2}\leq  \sigma_0$ a.s.. Therefore $\sigma_{0^+}:=\underset{a\rightarrow 0^+}{\lim}\uparrow \sigma_a\leq \sigma_0$ a.s.. Since the process $M$ is Feller, it is quasi-continuous to the left and one has by the absence of negative jumps: on the event $\{\sigma_{0^+}<\infty\}$:
\[M(\sigma_{0^+})=\underset{a\rightarrow 0^+}{\lim} M(\sigma_a)=\underset{a\rightarrow 0^+}{\lim} a=0.\] 
Thus, since by definition $\sigma_0$ is the first hitting time of $0$, $\sigma_{0^+}\geq \sigma_0$ a.s. This entails $\sigma_{0^+}=\sigma_0$. On the event $\sigma_{0^+}=\infty$, trivially $\sigma_0=\infty$. To sum up, we have $\sigma_{0+}=\sigma_0$ a.s.  By \eqref{equationthetainvariantfunc}, for any $\theta>0$, $\mathbb{E}_x[e^{-\theta \sigma_a}]=\frac{f_\theta(x)}{f_\theta(a)}$. In this equality, by letting $a$ go to $0$, we see that  
$$\mathbb{E}_x[e^{-\theta \sigma_0}]=\frac{f_\theta(x)}{f_\theta(0)}, $$
with \begin{align*}f_\theta(0):=f_\theta(0^+)&=\int_{0}^{\infty}e^{-\frac{\theta}{b}s}\exp\left(\int_s^1\bar{\mu}(u)\ddr u\right)\ddr s\\
&=\int_{0}^{1}e^{-\frac{\theta}{b}s}\exp\left(\int_s^1\bar{\mu}(u)\ddr u\right)\ddr s+ \int_{1}^{\infty}e^{-\frac{\theta}{b}s}\exp\left(\int_s^1\bar{\mu}(u)\ddr u\right)\ddr s.
\end{align*}
The second term on the right-hand side is nothing but $f_\theta(1)$ which is always finite. The first term is finite if and only if $\mathcal{J}<\infty$. In the case $\mathcal{J}=\infty$, one therefore has
$\mathbb{E}_x[e^{-\theta \sigma_0}]=0$ and $\sigma_0=\infty$ a.s. Otherwise, when $\mathcal{J}<\infty$, we have $f_\theta(0)<\infty$ and $\sigma_0<\infty$ with positive probability. 

The proof of Theorem \ref{thmfirstpassagetimeM} is achieved. \qed

\begin{remark} 
The function $f_\theta$ is not $\theta$-invariant for the semigroup $(P_t)$, but only $\theta$-excessive, namely $e^{-\theta t}P_t f_\theta \leq f_\theta$ for all $t\geq 0$. We show indeed below that for all $x$ and $t$:
\begin{equation}\label{meanftheta} P_tf_\theta(x)=\mathbb{E}_x[f_\theta\big(M(t)\big)]=e^{\theta t}f_\theta(x\vee bt).
\end{equation}
By Fubini-Tonelli's theorem and the expression of $\mathbb{P}_x(M(t)\leq s)$ given in Theorem \ref{thmfddsemigroupESN}-(2), we get
\begin{align*}
\mathbb{E}_x[f_\theta\big(M(t)\big)]&=\int_{0}^{\infty}\mathbb{E}_x[\mathbbm{1}_{\{M(t)\leq s\}}]e^{-\frac{\theta}{b} s}\exp\left(\frac{1}{b}\int_{s}^{1}\bar{\mu}(u)\ddr u\right)\ddr s\\
&=\int_{0}^{\infty}\exp\left(-\frac{1}{b}\int_{s}^{s+bt}\bar{\mu}(u)\ddr u \right)\mathbbm{1}_{\{s\geq (x-bt)_+\}}\exp\left(\frac{1}{b}\int_{s}^{1}\bar{\mu}(u)\ddr u\right)e^{-\frac{\theta}{b} s}\ddr s\\
&=\int_{(x-bt)_+}^\infty\exp\left(-\frac{1}{b}\int_1^{s+bt}\bar{\mu}(u)\ddr u \right)e^{-\frac{\theta}{b} s}\ddr s. 
\end{align*}
The change of variable $v=s+bt$ provides
\begin{align*}
\mathbb{E}_x[f_\theta\big(M(t)\big)]&=\int_{(x-bt)_+ +bt}^\infty\exp\left(-\frac{1}{b}\int_1^{v}\bar{\mu}(u)\ddr u \right)e^{-\theta \frac{v}{b}}e^{\theta t}\ddr v\\
&=e^{\theta t}f_\theta(bt)\mathbbm{1}_{\{x<bt\}}+ e^{\theta t}f_\theta(x)\mathbbm{1}_{\{x\geq bt\}}=e^{\theta t}f_\theta(x\vee bt) \leq e^{\theta t}f_\theta(x),
\end{align*}
since $f_\theta$ is decreasing. We see that when $x<bt$, $e^{-\theta t}P_t f_\theta(x) \neq f_\theta(x)$. Hence $f_\theta$ is not $\theta$-invariant for the semigroup, although we have seen in the proof of Theorem \ref{thmfirstpassagetimeM}-(1), that $\mathscr{A}f_\theta=\theta f_\theta$. The \textit{unstopped} process $(e^{-\theta t}f_\theta\big(M(t)\big),t\geq 0)$ is therefore a strict local martingale and $f_\theta$ does not belong to the domain of the ESN process.
\end{remark}

It remains to identify the law of the inverse local time of $M$ when $0$ is accessible, i.e. when $\mathcal{J}<\infty$. Recall that this entails $f_\theta(0)<\infty$ for all $\theta>0$.\\

\noindent \textbf{Proof of Theorem \ref{thminverselocaltime}}. The fact that $0$ is regular for itself is shown by the following simple argument: under the assumption $\mathcal{J}<\infty$, for any $\theta>0$, the function $f_\theta$ is well-defined and continuous at $0$, hence
$\mathbb{E}_x(e^{-\theta \sigma_0})=f_\theta(x)/f_\theta(0)\longrightarrow 1$ as $x$ goes to $0$, so that, for any $t>0$,
\begin{equation}\label{eqregularforitself}
\mathbb{P}_x(\sigma_0>t)\underset{x\rightarrow 0}{\longrightarrow} 0.\end{equation}
The above display will entail the regularity of $0$ as we show below.   

Denote by $\mathrm{r}_0 :=\inf\{t>0: M(t)=0\}$, the first return time to $0$. Recall that by definition $0$ is regular for itself if the process returns immediately to $0$ after it has left it, that is to say if $\mathbb{P}_0(\mathrm{r}_0 =0)=1$. Notice that for any $x>0$, $\mathrm{r}_0 =\sigma_0$ a.s. under $\mathbb{P}_x$. For any $s,t>0$. By the Markov property at a time $s$, and by using the fact that $M(s) \underset{s\rightarrow 0+}{\longrightarrow} 0$ a.s. under $\mathbb{P}_0$ together with \eqref{eqregularforitself}, one has by Lebesgue's theorem
\[\mathbb{P}_0(\mathrm{r}_0 >t+s)=\mathbb{E}_0\left[\mathbb{P}_{M(s)}(\sigma_0>t)\mathbbm{1}_{\{\mathrm{r}_0 >s\}}\right]\underset{s\rightarrow 0+}{\longrightarrow} 0.\]
Therefore $\mathbb{P}_0(\mathrm{r}_0 >t)=0$ for any $t>0$ and $0$ is regular for itself. 

We now apply general results about local times. Recall that the local time  $(L_t,t\geq 0)$ at $0$ of $M$ is uniquely defined up to a multiplicative positive constant.  \cite{zbMATH03205726} have shown how the $\theta$-potential operators of the local time can be associated to a family of $\theta$-excessive functions, see the functions $(\Psi_1^\theta,\theta>0)$ below, and how one can relate the Laplace exponent of the inverse local time to $(\Psi_1^\theta,\theta>0)$. Following their notation, we introduce, for any $\theta>0$,
\[\Phi^{\theta}:x\mapsto \mathbb{E}_x[e^{-\theta \sigma_0}] \text{ and } \Psi_1^{\theta}:=\Phi^{1}-(\theta-1)U^{\theta}\Phi^{1}, \]
with $U^{\theta}$ the resolvent of the process $M$, that is for any bounded measurable function $f$
\[U^{\theta}f(x):=\mathbb{E}_x\left[\int_0^{\infty}e^{-\theta t}f\big(M(t)\big)\ddr t\right].\]
By \cite[Theorem 1.2]{zbMATH03205726}, which applies since $0$ is regular for itself, the local time of $M$ at $0$ is the unique process $L:=(L_t,t\geq 0)$ such that for all $x\geq 0$ and all $\theta>0$,
\[\mathbb{E}_x\left[\int_0^\infty e^{-\theta t}\ddr L_t\right]=\Psi_1^{\theta}(x).\]
By applying \cite[Theorem 2.1]{zbMATH03205726}, with their notation $x=x_0=0$, the inverse of the local time is a subordinator, that we denote by $(\tau_x, 0\leq x< \zeta)$, with Laplace exponent $\varphi(\theta):=1/\Psi_1^\theta(0)$. 
Only remains to compute $\Psi_1^\theta(0)=1-(\theta-1)U^{\theta}\Phi^1(0)$. By Theorem \ref{thmfirstpassagetimeM}-3), for any $x\geq 0$, $\Phi^1(x)=\frac{f_1(x)}{f_1(0)}$. Moreover, making use of \eqref{meanftheta}, we have
\begin{align*}
U^{\theta}f_1(0)&=\int_0^{\infty}e^{-\theta t}\mathbb{E}_0[f_1\big(M(t)\big)]\ddr t=\int_0^{\infty}e^{-(\theta-1) t}f_1(bt)\ddr t,
\end{align*}
and thus
\[\Psi_1^{\theta}(0)=1-(\theta-1)\int_{0}^{\infty}e^{-(\theta-1)t}\frac{f_1(bt)}{f_1(0)}\ddr t.\]
Recall $f_\theta$ in \eqref{thetainvariantfunction} and note that $f'_1(bt)=-e^{-t}\exp\left(\frac{1}{b}\int_{bt}^{1}\bar{\mu}(u)\ddr u\right)$ for all $t\geq 0$. By integration by parts, we have
\begin{align*}
\int_{0}^{\infty}(\theta-1)e^{-(\theta-1)t}f_1(bt)\ddr t&=\left[-e^{-(\theta-1)t}f_1(bt)\right]_{t=0}^{t=\infty}+\int_{0}^{\infty}e^{-(\theta-1)t}bf'_1(bt)\ddr t\\
&=f_1(0)-\int_{0}^{\infty}e^{-(\theta-1)t}be^{-t}\exp\left(\frac{1}{b}\int_{bt}^{1}\bar{\mu}(u)\ddr u\right)\ddr t\\
&=f_1(0)-\int_{0}^{\infty}be^{-\theta t}\exp\left(\frac{1}{b}\int_{bt}^{1}\bar{\mu}(u)\ddr u\right)\ddr t\\
&=f_1(0)-f_\theta(0),
\end{align*}
where for the last equality we perform the change of variable $s=bt$. Therefore
\[\Psi_1^{\theta}(0)=1-\frac{f_1(0)-f_\theta(0)}{f_1(0)}=f_\theta(0)/f_1(0),\]
and the Laplace exponent $\varphi$ of $(\tau_x, x<\zeta)$ is given by $\varphi(\theta)=1/\Psi_1^{\theta}(0)=f_1(0)/f_\theta(0)$. 

Last, the fact that the closure of the zero-set of $M$ started from $0$ is the closed range of the subordinator $(\tau_x,0\leq x<\zeta)$, that is to say
\[\overline{\{t\geq 0: M(t)=0\}}=\overline{\{\tau_x: 0\leq x<\zeta\}} \text{ a.s.}\]
follows from a standard result on local times, see e.g. \cite[Chapter IV, Theorem 4-(iii)]{Bertoin96}.
\qed

We now explain the connection between the ESN processes and the random cutout sets by establishing Lemma \ref{lemmazeroset}.
\medskip

\noindent \textbf{Proof of Lemma \ref{lemmazeroset}}. Recall $b>0$. Recall Definition \ref{ESNdef}. The $\mathrm{ESN}(b,\mu)$ started at $0$ is given for all $t\geq 0$ by $M(t)=\sup_{0\leq s\leq t}(\xi_s-b(t-s))_+$. We see that $M(t)>0$ if and only if there exists an atom $(s,\xi_s)$ such that $s\leq t$ and $\xi_s-b(t-s)>0$, namely $t\in \bigcup_{s\geq 0}[s,s+\xi_s/b)$. Recall that $M(t)\geq 0$ a.s. Therefore $\{M(t)>0\}^c=\{M(t)=0\}$ for all $t$ a.s. and 
\[\mathcal{Z}=\{t\geq 0: M(t)=0\}=[0,\infty[-\bigcup_{s\geq 0}[s,s+\xi_s/b).\]
Plainly, $\bar{\mathcal{Z}}:=\overline{\{t>0: M(t)=0\}}\subset [0,\infty[-\bigcup_{s\geq 0}(s,s+\xi_s/b)$. We now show the other inclusion.
For any time $t\in \bar{\mathcal{Z}}-\mathcal{Z}$, there exists a sequence $(t_n)_{n\geq 1}$ such that, for all $n$, $t_n\in \mathcal{Z}$, $t_n<t$ and $t_n\underset{n\rightarrow \infty}{\rightarrow} t$. Hence, $M$ has zero for left limit at time $t$, $M(t-)=0$. By construction the only possibility is that $t$ is an atom of time, say $u$, of the Poisson point process $\mathcal{N}$ such that $M(u-)=0$. We now verify that all such atoms belong to $[0,\infty[-\bigcup_{s\geq 0}(s,s+\xi_s/b)$. By contradiction, suppose that $u\in \bigcup_{s\geq 0}(s,s+\xi_s/b)$. There exists then $s$ such that for all $n$  large enough $t_n\in (s,s+b\xi_s)$. This would entail $M(t_n)>0$ which is not possible since $t_n\in \mathcal{Z}$. 
%
%
\qed
 
It only remains to explain how Corollary \ref{corollaryrandomcutoutset}, which restates Fitzsimmons-Fristedt-Shepp Theorem,  is deduced.
\medskip

\noindent \textbf{Proof of Corollary \ref{corollaryrandomcutoutset}}. It will follow directly by applying Lemma \ref{lemmazeroset}, Theorem \ref{thmfirstpassagetimeM} and Theorem \ref{thmfddsemigroupESN}-3. in the case $b=1$. Consider $M$ the $\mathrm{ESN}(1,\mu)$ started at $0$, constructed from the same Poisson point process $\mathcal{N}$ as the random cutout set $\mathcal{R}$. By Lemma \ref{lemmazeroset}, one has the identity
$$\mathcal{R}=\overline{\{t\geq 0 : M(t)=0\}}.$$
In particular, by Theorem \ref{thmfirstpassagetimeM}-3, $0$ is inaccessible for $M$, equivalently $\mathcal{R}=\{0\}$ a.s. if and only if $\mathcal{J}=\infty$ (with $b=1$). When $\mathcal{J}<\infty$, $\mathcal{R}$ coincides with $\bar{\mathcal{Z}}=\overline{\{\tau_x: 0\leq x< \zeta\}}$ 
where $(\tau_x, 0\leq x< \zeta)$ the subordinator defined as the inverse local time of $M$ at $0$. By Theorem \ref{thminverselocaltime}, its Laplace exponent is $\varphi:\theta\mapsto f_1(0)/f_\theta(0)$. 


The fact that $\mathcal{R}$ is a regenerative set is immediate since $\bar{\mathcal{Z}}$ is the closure of the range of the subordinator $(\tau_x,0\leq x<\zeta)$, see \cite[Chapter 2.1]{subordinatorbertoin}. It is perfect a.s. since the boundary $0$ of $M$ is regular for itself. The set $\mathcal{R}$ is bounded a.s. if and only if $M$ is transient, namely $\mathcal{I}<\infty$.  We have seen in Theorem \ref{thmfddsemigroupESN}-3, that $\mathbb{P}(t\in \mathcal{Z})>0$ for all $t>0$ if and only if $\int_0^1\bar{\mu}(s)\ddr s<\infty$. Fix $a>0$, an application of Fubini-Tonelli theorem yields  $\int_0^a\mathbb{P}_0(M(s)=0)\ddr s=\mathbb{E}[\lambda(\mathcal{Z}\cap [0,a])]>0$ (which is equivalent to $\lambda(\mathcal{Z}\cap [0,a])>0$ with positive probability) if and only if $\int_0^1 \bar{\mu}(v)\ddr v<\infty$. Recall that $\lambda(\mathcal{Z})$ is either zero almost surely or positive almost surely, see e.g. \cite[Proposition 1.8]{subordinatorbertoin}. Then we can conclude that $\lambda(\mathcal{Z})$ is positive almost surely when $\int_0^1 \bar{\mu}(v)\ddr v<\infty$. 
\qed
\begin{remark}
Our proof of Fitzsimmons-Fristedt-Shepp Theorem is different from those already given in the literature, see \cite{Fitzsimmons}, \cite{subordinatorbertoin}, and Fitzsimmons's PhD thesis \cite[Chapter 4]{Fitzthesis}, as well as \cite{zbMATH04203384} where the point of view of random multiplicative measures is chosen. These proofs are based on approximations of the random set $\mathcal{R}$, see \eqref{randomcutoutset}, through random cutout sets $\mathcal{R}^{(\epsilon)}$ with finite measures $\mu^{(\epsilon)}$ such that $\mu^{(\epsilon)}$ converges towards $\mu$ as $\epsilon$ goes to $0$, and on limit theorems for regenerative sets, see \cite{zbMATH03874373} and \cite[Lemma 3, Chapter 4]{Fitzthesis}. Here we do not approximate $\mathcal{R}$, the uncovered points, i.e. the elements of $\mathcal{R}$, are seen as zeros (or limits of zeros) of the standard ESN process $M$, and are encoded by its local time. In other words, the covered set is viewed as the union of intervals of excursions away from $0$ of the Markov process $M$. 
The core of the proof lies in the fact that we have  at hand a $\theta$-invariant function $f_\theta$, see \eqref{thetainvariantfunction}, of the generator $\mathcal{A}$. 
\end{remark}

\begin{acks}[Acknowledgments]
We thank Patrick Fitzsimmons for a discussion  after finishing this work about the initial proof of \cite[Theorem 1]{Fitzsimmons} and for sending an unpublished part of his PhD thesis. Lemma \ref{lemmazeroset} in particular has been observed in \cite[Chapter 4, Section 3]{Fitzthesis}.  We also wish to thank the anonymous referees for their thorough reports. We are grateful for a comment leading to a neat result on the speed of the convergence towards the limiting distribution in total variation distance. 
\end{acks}
\section{Appendix}\label{Appendix}

\begin{proof}[Proof of Lemma \ref{lemmaapprox}] Let $f\in \mathcal{D}_1$. We shall define the sequence $(f_n)_{n\geq 1}$ in $\mathcal{D}_0$ approaching the function $f$ and satisfying $|f'_n|\leq |f'|$ within two steps. Recall that $f'(0)=0$. Define $g_n$ as follows: for any $y\in [0,1/n]$, $g_n(y):=0$ then for $y\in[1/n,2/n]$, $g_n(y):=\frac{y-1/n}{1/n}f'(2/n)$ and $g_n(y):=f'(y)$ for $y\geq 2/n$. One has 
\[\sup_{y\in [0,\infty)}|g_n(y)-f'(y)|\leq \sup_{0\leq y\leq 1/n}|f'(y)|+\sup_{1/n\leq y\leq 2/n}\left|\frac{y-1/n}{1/n}f'(2/n)-f'(y)\right|\]
Since $f'$ is continuous and $f'(0)=0$ the first term at the right-hand side above goes to $0$ as $n$ goes to $\infty$. The second term is smaller than $|f'(2/n)|+ \sup_{0\leq y\leq 2/n}|f'(y)|$ which goes to $0$ as $n$ goes to $\infty$. Hence $g_n$ tends uniformly towards $f'$. 

Set now \[\bar{g}_n(y):=\begin{cases} g_n(y)  &\text{ if } |g_n(y)|\leq |f'(y)|\\
f'(y)  &\text{ if } |g_n(y)|>|f'(y)|.
 \end{cases}\]
Notice that $|\bar{g}_n|=|g_n|\wedge |f'|\leq |f'|$ and
$\bar{g}_n $ is continuous. One can plainly check furthermore that $\bar{g}_n \underset{n\rightarrow \infty}{\longrightarrow} f'$ uniformly. Define $f_n(x):=f(0)+\int_0^x \bar{g}_n(y)\ddr y$ for any $x\in [0,\infty)$. The function $f_n$ is constant on $[0,1/n]$, hence $f_n\in \mathcal{D}_0$ and $|f'_n|=|\bar{g}_n|\leq |f'|$ for all $n$. Last, for any $x\geq 0$,
\[\lvert f_n(x)-f(x) \lvert \leq \int_0^x \lvert \bar{g}_n(y)-f'(y)\lvert \ddr y\leq \frac{2}{n}||\bar{g}_n-f'||_\infty \underset{n\rightarrow \infty}{\longrightarrow} 0.\]
\end{proof}

{
\begin{funding}
The authors are supported  by the French National Research Agency (ANR): LABEX MME-DII (ANR11-LBX-0023-01). C.F is also supported by the European Union (ERC, SINGER, 101054787). Views and opinions expressed are however those of the authors only and do not necessarily reflect those of the European Union or the European Research Council. Neither the European Union nor the granting authority can be held responsible for them. %
\end{funding}


\begin{thebibliography}{4}
\bibitem[\protect\citeauthoryear{Bi and Delmas}{2014}]{zbMATH06347448}
Bi, H.  and Delmas, J-F. (2014). A population model with non-neutral
  mutations using branching processes with immigration. \textit{Electron. J. Probab.}
  \textbf{19} 1--23. \newblock \href{http://www.ams.org/mathscinet-getitem?mr=MR3238782}{MR3238782}

\bibitem[\protect\citeauthoryear{Bertoin}{1991}]{zbMATH00029121}
Bertoin, J. (1991). Increase of a {L{\'e}vy} process with no positive jumps.
 \emph{ Stochastics Stochastics Rep.} \textbf{37} 247--251. \newblock \href{http://www.ams.org/mathscinet-getitem?mr=MR1149349}{MR1149349}

\bibitem[\protect\citeauthoryear{Bertoin}{1994}]{zbMATH00812718} Bertoin, J.  (1994). On nowhere differentiability for L{\'e}vy processes.
\emph{ Stochastics Stochastics Rep.} \textbf{50}  205--210. \newblock \href{http://www.ams.org/mathscinet-getitem?mr=MR1786116}{MR1786116}

\bibitem[\protect\citeauthoryear{Bertoin}{1996}]{Bertoin96}
Bertoin, J. (1996). \emph{ {L}\'{e}vy Processes. Cambridge tracts in mathematics, Cambridge University Press \textbf{121}.} \newblock \href{http://www.ams.org/mathscinet-getitem?mr=MR1406564}{MR1406564}


\bibitem[\protect\citeauthoryear{Bertoin}{1997}]{subordinatorbertoin}
Bertoin, J. (1999). \emph{Subordinators: Examples and Applications.  Bernard, P. (eds) Lectures on Probability Theory and Statistics. Lect. Notes Math. \textbf{1717}.} Springer, Berlin. 

\bibitem[\protect\citeauthoryear{Barral and Fan}{2004}]{zbMATH02093931}
Barral J. and Fan, A. (2004). Densities of some {Poisson} {{\(\mathbf
  T\)}}-martingales and random covering numbers. \emph{C. R. Math. Acad. Sci.
  Paris} \textbf{338}  571--574. \newblock \href{http://www.ams.org/mathscinet-getitem?mr=MR2057032}{MR2057032}

\bibitem[\protect\citeauthoryear{Barral and Fan}{2005}]{zbMATH02196675}
Barral J. and Fan, A. (2005).  Covering numbers of different points in  {Dvoretzky} covering. \emph{Bull. Sci. Math.} \textbf{129} 275--317. \newblock \href{http://www.ams.org/mathscinet-getitem?mr=MR2134123}{MR2134123}

\bibitem[\protect\citeauthoryear{Blumenthal}{1992}]{zbMATH01349990}
Blumenthal, R. (1992). \emph{Excursions of {Markov} Processes.} Basel:  Birkh{\"a}user. \newblock \href{http://www.ams.org/mathscinet-getitem?mr=MR1138461}{MR1138461}

\bibitem[\protect\citeauthoryear{Blumenthal and Getoor}{1964}]{zbMATH03205726}
Blumenthal, R. and Getoor, R. (1964). Local Times for {Markov} Processes. \emph{Z. Wahrscheinlichkeitstheor. Verw. Geb.} \textbf{3} 50--74. \newblock \href{http://www.ams.org/mathscinet-getitem?mr=MR0165569}{MR0165569}

\bibitem[\protect\citeauthoryear{Boxma, Perry and Stadje}{2006}]{zbMATH05033694}
Boxma, O., Perry D., Stadje, W. and Zacks, S.  (2006). A {Markovian} growth-collapse model. \emph{Adv. Appl. Probab.} \textbf{38} 221--243. \newblock \href{http://www.ams.org/mathscinet-getitem?mr=MR2213972}{MR2213972}

\bibitem[\protect\citeauthoryear{B{\"o}ttcher, Schilling and Wang}{2013}]{zbMATH06256582} B{\"o}ttcher B., Schilling, R. and Wang, J.  (2013). \emph{L{\'e}vy  matters {III}. {L{\'e}vy}-type processes: construction, approximation and
  sample path properties. Lect. Notes Math.} \textbf{2099}. Cham: Springer. 


\bibitem[\protect\citeauthoryear{Davis}{1984}]{Zbl0565.60070}
Davis, M. (1984). Piecewise-deterministic Markov processes: A general class of non- diffusion stochastic models. \emph{J. R. Stat. Soc., Ser. B}  \textbf{46} 353--388. \newblock \href{http://www.ams.org/mathscinet-getitem?mr=MR0790622}{MR0790622}


\bibitem[\protect\citeauthoryear{Davis}{1993}]{Zbl0780.60002} Davis, M. (1993). \emph{Markov Models and Optimization}. London: Chapman \& Hall. \newblock \href{http://www.ams.org/mathscinet-getitem?mr=MR1283589}{MR1283589}

\bibitem[\protect\citeauthoryear{Dombry}{2012}]{Dombry}
Dombry, C. (2012). Extremal shot noises, heavy tails and max-stable random
  fields. \emph{Extremes} \textbf{15} 129–158. \newblock \href{http://www.ams.org/mathscinet-getitem?mr=MR2915578}{MR2915578}

\bibitem[\protect\citeauthoryear{Dwass}{1966}]{dwass1966}
Dwass, M. (1966). Extremal processes II. \emph{ Illinois J. Math.} \textbf{10} 381--391. \newblock \href{http://www.ams.org/mathscinet-getitem?mr=MR0193661}{MR0193661}
  
\bibitem[\protect\citeauthoryear{Evans and Ralph}{2010}]{zbMATH05678697}
Evans, S. and Ralph, P. (2010). Dynamics of the time to the most  recent common ancestor in a large branching population. \emph{Ann. Appl. Probab.}  \textbf{20} 1--25. \newblock \href{http://www.ams.org/mathscinet-getitem?mr=MR2582640}{MR2582640}

\bibitem[\protect\citeauthoryear{Fitzsimmons}{1981}]{Fitzthesis}
Fitzsimmons, P. (1981). \emph{Characterization and Convergence of Markov Sets.} PhD Thesis. 

\bibitem[\protect\citeauthoryear{Fitzsimmons, Fristedt and Maisonneuve}{1985}]{zbMATH03874373}
Fitzsimmons, P., Fristedt, B. and  Maisonneuve, B.  (1985). Intersections and limits of regenerative sets. \emph{Z.
  Wahrscheinlichkeitstheor. Verw. Geb.} \textbf{70} 157--173. \newblock \href{http://www.ams.org/mathscinet-getitem?mr=MR0799144}{MR0799144}

\bibitem[\protect\citeauthoryear{Fitzsimmons, Fristedt and Shepp}{1985}]{Fitzsimmons}
Fitzsimmons, P. , Fristedt, B. and Sheep, L. (1985). The set of  real numbers left uncovered by random covering intervals. \emph{Z.
  Wahrscheinlichkeitstheor. Verw. Geb.} \textbf{70} 175--189. \newblock \href{http://www.ams.org/mathscinet-getitem?mr=MR07999145}{MR07999145}

\bibitem[\protect\citeauthoryear{Fitzsimmons}{2022}]{zbMATH07605707}
Fitzsimmons, P. (2022). Monotonicity properties of regenerative sets and
  {Lorden}'s inequality. \emph{Dirichlet forms and related topics, in honor of
  Masatoshi Fukushima's beiju, IWDFRT 2022, Osaka, Japan, August 22--26.} Singapore: Springer. \newblock \href{http://www.ams.org/mathscinet-getitem?mr=MR4508842}{MR4508842}


\bibitem[\protect\citeauthoryear{Foucart and Uribe Bravo}{2014}]{foucart2014}
Foucart, C. and Uribe~Bravo, G.  (2014). Local extinction in  continuous-state branching processes with immigration. \emph{Bernoulli} \textbf{20} 1819--1844. \newblock \href{http://www.ams.org/mathscinet-getitem?mr=MR3263091}{MR3263091}

\bibitem[\protect\citeauthoryear{Foucart and Yuan}{2023+}]{foucartyuan2}
Foucart, C. and Yuan, L. (2023+).  Weak convergence of continuous-state branching processes with large immigration. \emph{Preprint arXiv:2311.04045}.  

\bibitem[\protect\citeauthoryear{Heinrich and Molchanov}{1994}]{zbMATH00796709}
Heinrich, L. and Molchanov, I. (1994). Some limit theorems for extremal  and union shot-noise processes. \emph{Math. Nachr.} \textbf{168} 139--159. \newblock \href{http://www.ams.org/mathscinet-getitem?mr=MR1282636}{MR1282636}

\bibitem[\protect\citeauthoryear{Iksanov and Kabluchko}{2018}]{IKSANOV2018291} 
Iksanov, A. and Kabluchko, Z. (2018). Functional limit theorems for {G}alton–{W}atson processes with very active immigration. \emph{Stochastic Process. Appl.} \textbf{128} 291--305. \newblock \href{http://www.ams.org/mathscinet-getitem?mr=MR3729539}{MR3729539}

\bibitem[\protect\citeauthoryear{Kahane}{1990}]{zbMATH04203384}
Kahane, J-P. (1990). Recouvrements al{\'e}atoires et th{\'e}orie du  potentiel. ({Random} coverings and potential theory). \emph{Colloq. Math.}
  \textbf{60/61} 387--411. \newblock \href{http://www.ams.org/mathscinet-getitem?mr=MR1096386}{MR1096386}

\bibitem[\protect\citeauthoryear{Kahane}{2000}]{zbMATH01424076}
Kahane, J-P. (2000). Random coverings and multiplicative processes. \emph{Fractal geometry  and stochastics II. Proceedings of the 2nd conference, Greifswald/ Koserow,
  Germany, August 28--September 2, 1998.} Basel: Birkh{\"a}user. \newblock \href{http://www.ams.org/mathscinet-getitem?mr=MR1785624}{MR1785624}

\bibitem[\protect\citeauthoryear{Kallenberg}{2002}]{Kallenberg} Kallenberg, O. (2002). \emph{Foundations of Modern Probability, second ed.  Probability and its Applications (New York)}. Springer-Verlag, New York. \newblock \href{http://www.ams.org/mathscinet-getitem?mr=MR1876169}{MR1876169}
  
\bibitem[\protect\citeauthoryear{Kella and Stadje}{2001}]{Zbl0994.60092}
Kella, O. and Stadje, W.  (2001). On hitting times for compound Poisson dams with exponential jumps and linear release rate. \emph{J. Appl. Probab.}  \textbf{38} 781--786. \newblock \href{http://www.ams.org/mathscinet-getitem?mr=MR1860216}{MR1860216}

\bibitem[\protect\citeauthoryear{L\"opker and Stadje}{2011}]{zbMATH05918584}
L{\"o}pker, A. and Stadje, W.  (2011). Hitting times and the running  maximum of {Markovian} growth-collapse processes. \emph{J. Appl. Probab.}  \textbf{48} 295--312. \newblock \href{http://www.ams.org/mathscinet-getitem?mr=MR2840300}{MR2840300}

\bibitem[\protect\citeauthoryear{Maisonneuve}{1972}]{Maisonneuve}
Maisonneuve, B. (1971). Ensembles r\'eg\'en\'eratifs, temps locaux et subordinateurs \emph{S\'eminaire de probabilit\'es de Strasbourg}, 147--169.\newblock 
\href{http://www.ams.org/mathscinet-getitem?mr=MR448586}{MR448586}

\bibitem[\protect\citeauthoryear{Mandelbrot}{1972}]{Mandelbrot}
Mandelbrot, B. (1972). Renewal sets and random cutouts. \emph{Z. Wahrscheinlichkeitstheorie und Verw. Gebiete} \textbf{22} 145--157. \newblock \href{http://www.ams.org/mathscinet-getitem?mr=MR0309162}{MR0309162}
  

\bibitem[\protect\citeauthoryear{Marchal}{2015}]{zbMATH06441328}
Marchal, P. (2015). A class of special subordinators with nested ranges. \emph{ Ann. Inst. Henri Poincar{\'e}, Probab. Stat.} \textbf{51} 533--544. \newblock \href{http://www.ams.org/mathscinet-getitem?mr=MR3335014}{MR3335014}

\bibitem[\protect\citeauthoryear{Resnick}{1987}]{Res87}
Resnick, S. (1987). \emph{Extreme Values, Regular Variation and Point Processes.  Springer series in operations research and financial engineering}.  Springer, New York. \newblock \href{http://www.ams.org/mathscinet-getitem?mr=MR2364939}{MR2364939}

\bibitem[\protect\citeauthoryear{Resnick and Rubinovitch}{1973}]{APR:10159662}
Resnick S.~I. and Rubinovitch, M. (1973). The structure of extremal  processes. \emph{Adv. Appl. Probab.} \textbf{5} 287--307. \newblock \href{http://www.ams.org/mathscinet-getitem?mr=MR0350867}{MR0350867}

\bibitem[\protect\citeauthoryear{Rivero}{2003}]{zbMATH01901511}
Rivero, V. (2003). On random sets connected to the partial  records of {Poisson} point process. \emph{J. Theor. Probab.} \textbf{16} 277--307. \newblock \href{http://www.ams.org/mathscinet-getitem?mr=MR1956832}{MR1956832}

\bibitem[\protect\citeauthoryear{Roch}{2023}]{Zbl07762391}
Roch, S. (2024). \emph{Modern Discrete Probability. An Essential Toolkit. Cambridge: Cambridge University Press.}  


\bibitem[\protect\citeauthoryear{Rogers and Williams}{1985}]{zbMATH01478492}
Rogers, C. and Williams, D. (2000). \emph{Diffusions, {Markov} Processes and  Martingales. {Vol}. 1: {Foundations}., 2nd ed. Cambridge: Cambridge  University Press}. \newblock \href{http://www.ams.org/mathscinet-getitem?mr=MR1796539}{MR1796539}

\bibitem[\protect\citeauthoryear{Serra}{1982}]{zbMATH03902440}
Serra, J. (1982).  \emph{Image Analysis and Mathematical Morphology. Academic Press, Inc}.   \newblock \href{http://www.ams.org/mathscinet-getitem?mr=MR0753649}{MR0753649}

\bibitem[\protect\citeauthoryear{Shepp}{1972}]{zbMATH03378272}
Shepp, L. (1972). Covering the line with random intervals. \emph{Z. Wahrscheinlichkeitstheor. Verw. Geb.} \textbf{23} 163--170. \newblock \href{http://www.ams.org/mathscinet-getitem?mr=MR0322923}{MR0322923}

\end{thebibliography}
\end{document}